\documentclass[10pt,journal,compsoc]{IEEEtran}
\usepackage{xcolor}
\usepackage{amsmath,amsfonts}
\usepackage{array}
\usepackage{textcomp}
\usepackage{stfloats}
\usepackage{url}
\usepackage{verbatim}
\usepackage{graphicx}
\usepackage{capt-of}
\usepackage{hyperref}
\usepackage{algorithm}
\usepackage{algpseudocode}
\usepackage{amssymb}
\usepackage{placeins}
\usepackage{mathtools}
\usepackage{booktabs}
\usepackage{tabularx}
\usepackage{makecell}
\usepackage{enumitem}

\usepackage{amsthm}
\newtheorem{theorem}{Theorem}
\newtheorem{assumption}{Assumption}
\newtheorem{lemma}{Lemma}
\newtheorem{proposition}{Proposition}
\newtheorem{corollary}{Corollary}

\theoremstyle{definition}

\theoremstyle{remark}
\newtheorem{remark}{Remark}

\ifCLASSOPTIONcompsoc
  \usepackage[nocompress]{cite}
\else
  \usepackage{cite}
\fi
\ifCLASSOPTIONcompsoc
 \usepackage[caption=false,font=footnotesize,labelfont=sf,textfont=sf]{subfig}
\else
 \usepackage[caption=false,font=footnotesize]{subfig}
\fi

\begin{document}

\title{Spectral Sensitivity of Directed Weighted Networks: Why Weakening Edges May Trigger Synchronization
\thanks{
This work has been submitted to the IEEE for possible publication. Copyright may be transferred without notice, after which this version may no longer be accessible.
}
}

\author{Xinyu~Wu, 
Xizhi~Liu, 
Chenyao~Zhang, 
Tianping~Chen, 
and~Wenlian~Lu%
\IEEEcompsocitemizethanks{
\IEEEcompsocthanksitem Xinyu Wu and Xizhi Liu are with the School of Mathematical Sciences, University of Science and Technology of China, Hefei, 230026, China.
\IEEEcompsocthanksitem Chenyao Zhang, Tianping Chen and Wenlian Lu are with the School of Mathematical Sciences, Fudan University, Shanghai 200433, China.
\IEEEcompsocthanksitem Corresponding authors: Wenlian Lu and Xizhi Liu (email: wenlian@fudan.edu.cn; liuxizhi@ustc.edu.cn).
}%
}

\IEEEtitleabstractindextext{%
\begin{abstract}
Synchronization in dynamical systems on directed weighted networks is often associated with stronger coupling and denser interactions. This paper shows that the opposite can also occur: weakening selected edges may increase the generalized algebraic connectivity, denoted by $\kappa$, and in some nonlinear systems this spectral improvement is accompanied by a transition from nonsynchronization to synchronization. To explain this effect, we develop a perturbation-based spectral sensitivity framework for directed weighted networks. We derive an explicit first-order formula for the response of $\kappa$ to edge-weight perturbations and show that it decomposes into a directed cut-energy term and a stationary redistribution term. This decomposition clarifies how asymmetric flow structure and invariant-mass redistribution jointly determine the synchronization role of each edge. Based on this theory, we design sensitivity-guided algorithms for edge weakening, edge deletion, negative-edge insertion, and edge strengthening. Experiments on synthetic and real networks show that these methods identify critical edges whose modification yields substantial gains in $\kappa$. Simulations of first- and second-order nonlinear consensus dynamics further show markedly faster convergence and, in some cases, a transition from incoherence to synchronization. The results provide a local spectral mechanism by which reducing or reallocating coupling can enhance synchronization-related performance. 
\end{abstract}

\begin{IEEEkeywords}
Network synchronization, directed networks, generalized algebraic connectivity, spectral sensitivity, phase transition.
\end{IEEEkeywords}}

\maketitle

\IEEEdisplaynontitleabstractindextext

%
\IEEEpeerreviewmaketitle

\IEEEraisesectionheading{\section{Introduction}\label{sec:introduction}}

\begin{figure}[!t]
    \centering
        \centering
        \includegraphics[width=\linewidth]{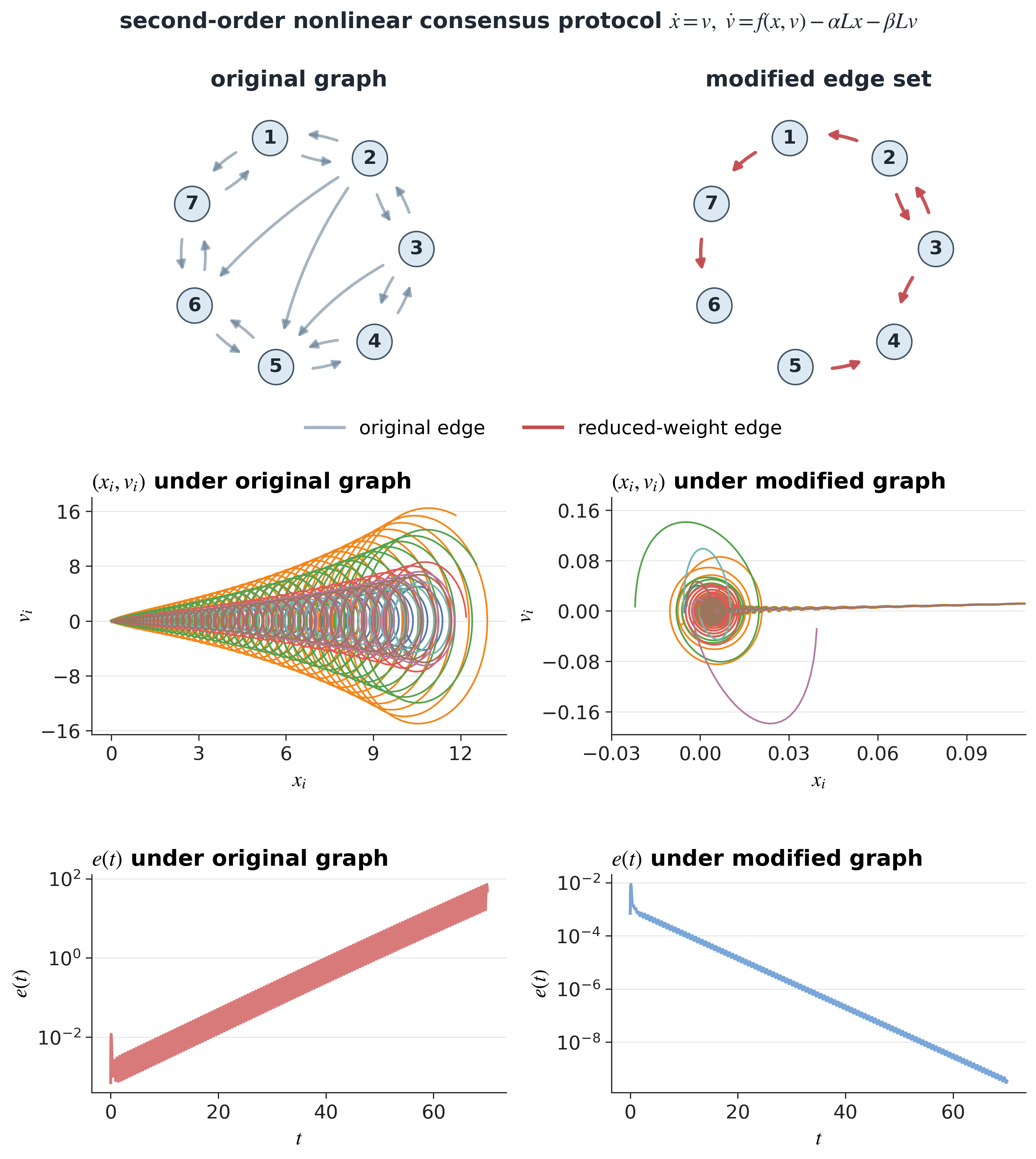}
        \caption{
Synchronization enhancement in a second-order nonlinear consensus system via edge-weight reduction. 
Top: original graph and modified parts with reduced weights (red). 
Middle: phase trajectories $(x_i(t), v_i(t))$. 
Bottom: mean-square error $e(t)$. 
}
        \label{fig:intro_second}
\end{figure}

\IEEEPARstart{S}{ynchronization}, in which dynamical units evolve toward a common behavior through network interactions, is a fundamental phenomenon observed in a wide range of natural and engineered systems, including neuronal networks~\cite{ashwin2016mathematical}, social interactions~\cite{proskurnikov2018tutorial}, and multi-agent coordination~\cite{Li2017Cooperative}. 
In many real-world scenarios, these interactions are inherently directional~\cite{Newman2018Networks} and are more appropriately modeled by directed graphs, where coupling is asymmetric and may even involve signed weights~\cite{Meng2022Disagreement} to encode both cooperative and antagonistic effects.

A prevailing intuition is that synchronization should generally benefit from stronger or more abundant interactions. 
In particular, increasing edge weights or adding connections is often expected to improve network connectivity and thereby facilitate collective behavior. 
This intuition is well aligned with classical spectral graph theory for undirected networks, where algebraic connectivity increases with coupling strength and plays a central role in characterizing convergence and synchronizability~\cite{fiedler1973algebraic,GodsilRoyle2001Algebraic,olfati2007consensus}.

However, such intuition does not extend straightforwardly to directed or signed networks. 
A growing body of work has revealed that, in asymmetric networked systems, structural modifications that appear locally detrimental, including link removal, weight reduction, and even the introduction of negative interactions, may in fact improve synchronization-related performance~\cite{nishikawa2010network,zeng2012manipulating,zhang2017effect,he2019perturbation,gao2023effects,bakrani2024cycle}. 
In particular, it has been shown that decreasing edge weights can accelerate convergence in certain directed settings; for example, \cite{gao2023effects2} demonstrates that, in leader--follower networks, properly reducing the weights of selected edges can improve the consensus convergence rate. 
These observations strongly suggest the existence of compensatory structural effects, whereby a local weakening of interaction may produce a global gain in dynamics. 

Despite these intriguing results, a unified theoretical framework explaining when and why edge weakening can enhance synchronization, especially for general directed weighted or signed networks, is still largely missing.
Existing results are often of two different types. Some are derived for highly structured graph families, where the special topology permits sharp conclusions on synchronization or convergence, while others emphasize structural or spectral responses to network modification without providing an explicit edgewise local perturbation law for a general directed weighted setting. The present paper aims to bridge these two viewpoints.

In this work, we develop a perturbation-based spectral framework to systematically analyze synchronization in directed weighted networks. 
Our starting point is the counterintuitive phenomenon that weakening selected edges may increase network synchronizability in directed weighted networks. Moreover, in some nonlinear systems and parameter regimes, such spectral improvement may even be accompanied by a transition from a nonsynchronized regime to a synchronized one.
As illustrated in Fig.~\ref{fig:intro_second}, our numerical examples show that, for the specific second-order nonlinear systems considered here, reducing the weights of appropriately chosen edges can move the dynamics from an incoherent regime to a synchronized one.

To explain this phenomenon, we focus on the generalized algebraic connectivity $\kappa$ defined in~\eqref{eq:kappa}, which is the central spectral quantity in our analysis. For directed weighted networks, $\kappa$ plays the role of an algebraic-connectivity-type quantity while accounting for coupling asymmetry through the left eigenvector associated with the zero eigenvalue of the Laplacian. This makes $\kappa$ more than a convenient spectral proxy: it is also a dynamically meaningful quantity. In particular, earlier studies~\cite{liu2008synchronization,lu2009synchronisation} on synchronization with asymmetrical coupling matrices highlighted the role of the left zero-eigenvector in directed settings, while works on directed second-order consensus~\cite{yu2009second,yu2010some} identified generalized algebraic-connectivity-type quantities as key parameters in consensus conditions. For these reasons, $\kappa$ provides a natural lens through which to study how local edge modifications in directed weighted networks influence global synchronization-related performance.

Our main theoretical contribution is an explicit first-order perturbation formula for the variation of $\kappa$ under edge-weight perturbations (see Theorem~\ref{thm:kappa}). 
More specifically, we show that the first-order sensitivity of $\kappa$ admits a natural and interpretable decomposition into two terms: a \emph{directed cut-energy term}, which captures the local effect of asymmetric flow imbalance across perturbed edges, and a \emph{stationary redistribution term}, which describes the global effect induced by the perturbation of the invariant measure. 
This decomposition reveals that, in directed networks, the synchronization role of an edge is determined not only by the local disagreement it carries, but also by how its perturbation redistributes stationary mass throughout the network. 

Building on this $\kappa$-sensitivity analysis, we further show that the first-order sensitivities are additive across multiple edge perturbations. 
This additivity makes it possible to rank and combine edge modifications in a principled way, and it leads naturally to an effective sensitivity-guided algorithmic framework for network optimization. 
Based on this idea, we develop a greedy method for edge modification, which provides a unified treatment of edge-weight reduction (see Algorithm~\ref{alg:iterative_reduce}), edge deletion and negative-edge insertion (see Algorithm~\ref{alg:discrete_modify}). 
The same framework can also be used to guide weight enhancement (see Section~\ref{sec:Discussion}), thereby allowing both weakening and strengthening strategies to be compared within a common spectral perspective.

Extensive experiments on synthetic and real directed networks demonstrate the effectiveness of the proposed approach. 
The results show that the $\kappa$-sensitivity framework can identify a set of critical edges whose weakening yields large improvements in generalized algebraic connectivity (see Fig.~\ref{fig:greedy}). 
Moreover, simulations of first- and second-order nonlinear consensus dynamics show that these spectral improvements are accompanied by markedly faster convergence and, in some cases, by a clear transition from incoherence to synchronization (see Fig.~\ref{fig:first_nonlinear} and Fig.~\ref{fig:second_nonlinear}). 
These observations indicate that the proposed sensitivity is not only useful for local spectral analysis, but also highly informative for nonlinear synchronization behavior in practice.

Collectively, our results provide a unified perturbation-based spectral perspective on synchronization optimization in directed weighted networks. 
The proposed framework naturally extends to a broader class of edge-level operations, including edge strengthening, edge deletion, and negative-edge insertion.
This offers a new viewpoint for the analysis, design, and control of directed networked dynamical systems.

The main contributions of this work are summarized as follows:
\begin{itemize}

\item We establish a perturbation-based spectral framework for directed weighted networks and derive an explicit first-order sensitivity formula for the generalized algebraic connectivity $\kappa$. 
This formula quantifies the variation of $\kappa$ under edge-level perturbations and yields an interpretable decomposition into a directed cut-energy term and a stationary redistribution term.

\item We show that the first-order $\kappa$-sensitivity is additive across multiple edge perturbations. 
Based on this property, we develop a backtracking-based greedy algorithm that uses the local perturbation formula as a ranking heuristic for edge-level modifications.

\item We validate the proposed framework on synthetic and real directed networks, and further test it on first- and second-order nonlinear consensus dynamics. 
In the reported experiments, sensitivity-guided edge weakening consistently improves $\kappa$ and convergence performance, and in some tested instances is accompanied by a transition from a nonsynchronized regime to a synchronized one.

\end{itemize}

The remainder of this paper is organized as follows. 
Section~\ref{sec:RelatedWork} reviews related work. 
Section~\ref{sec:Preliminaries} introduces the problem formulation and preliminaries. 
Section~\ref{sec:Perturbation} presents the perturbation analysis and the main theoretical results. 
Section~\ref{sec:Algorithm} develops the proposed greedy and heuristic algorithms for edge modification. 
Section~\ref{sec:Results} reports numerical results on real directed networks. 
Section~\ref{sec:Discussion} provides further discussion. 
Finally, Section~\ref{sec:Conclusion} concludes the paper.

\section{Related Work}
\label{sec:RelatedWork}

This paper is situated at the intersection of spectral graph theory, directed consensus, and synchronization-oriented network modification. 
To provide a clear research context for this interdisciplinary topic, we first review the classical spectral intuition developed for undirected networks, then explain why directed networks require different spectral quantities, and finally discuss network modification, sensitivity analysis, and edge-level design. 
Table~\ref{tab:related_work_comparison} summarizes the positioning of the proposed framework relative to representative related work.

\subsection{Algebraic Connectivity and Monotone Intuition}
\label{subsec:rw_undirected}

Algebraic connectivity is one of the central quantities in spectral graph theory. 
For an undirected weighted graph, the second smallest eigenvalue of the Laplacian, usually denoted by $\lambda_2$, characterizes graph connectivity in a quantitative way and is closely related to cuts, diffusion, random walks, robustness, and convergence of agreement processes~\cite{Newman2018Networks,fiedler1973algebraic,GodsilRoyle2001Algebraic}. 
In networked dynamical systems, this quantity also provides an important bridge between graph structure and collective behavior. 
For example, in consensus and cooperative control, larger algebraic connectivity is typically associated with faster convergence of agreement dynamics~\cite{Li2017Cooperative,olfati2007consensus}. 

A consequence of this classical theory is a strong monotone intuition: 
adding undirected edges or increasing symmetric edge weights should improve connectivity and is often expected to facilitate convergence or synchronizability. 
This intuition is mathematically justified for pure edge strengthening in undirected graphs, because strengthening a single undirected edge $\{i,j\}$ adds the positive semidefinite rank-one Laplacian contribution 
$\varepsilon (e_i-e_j)(e_i-e_j)^\top$. 
More generally, strengthening multiple undirected edges adds a sum of such positive semidefinite contributions, and hence cannot decrease the algebraic connectivity. 
This explains why many network-design methods for undirected graphs focus on edge addition, edge rewiring, or related graph-augmentation strategies for maximizing algebraic connectivity~\cite{ghosh2006growing,sydney2013optimizing}.

However, this monotone picture is fundamentally tied to symmetry. 
Once interactions become directed, an edge-weight perturbation no longer corresponds to a symmetric positive semidefinite update. 
Increasing one directed edge may change not only the local interaction across that edge, but also the global balance of influence induced by the directed flow. 
Therefore, the undirected principle that ``more coupling improves connectivity'' cannot be directly transferred to directed weighted networks. 
This observation motivates the need for directed spectral quantities and perturbation tools that explicitly account for asymmetry.

\subsection{Directed Consensus and Generalized Connectivity}
\label{subsec:rw_directed}

Directed networks arise naturally in many real systems, including social influence networks, biological regulation, transportation systems, communication networks, and multi-agent coordination. 
For such networks, the graph Laplacian is generally nonsymmetric, and its left and right eigenvectors play different roles. 
A key feature of directed consensus and synchronization is that the left eigenvector associated with the zero eigenvalue determines the invariant weighted average or stationary mass distribution of the network~\cite{olfati2007consensus}. 
Thus, directed synchronization is not governed only by pairwise coupling strengths; it also depends on how influence is globally distributed across the network.

Several works have developed synchronization and consensus theory for directed or asymmetrically coupled networks. 
Liu and Chen~\cite{liu2008synchronization} studied synchronization in nonlinearly coupled networks with asymmetric coupling matrices and used the left zero-eigenvector $\xi$ and the weighted matrix $\Xi=\operatorname{diag}(\xi)$ in Lyapunov analysis. 
Lu and Chen~\cite{lu2009synchronisation} further developed synchronization criteria for complex networks with directed topologies. 
For second-order multi-agent systems, Yu, Chen, Cao, and Kurths~\cite{yu2009second,yu2010some} introduced generalized algebraic-connectivity-type quantities involving the symmetrized matrix $(\Xi L+L^\top \Xi)/2$
and showed that such quantities enter sufficient and sometimes necessary conditions for directed second-order consensus. 
These works provide the main dynamical motivation for the generalized algebraic connectivity $\kappa$ used in this paper.

More broadly, directed spectral graph theory has shown that directed graphs require spectral objects beyond the standard undirected Laplacian. 
By incorporating the Perron vector of a directed random walk, Chung~\cite{chung2005laplacians} introduced a stationary-distribution-weighted Hermitian Laplacian and established a Cheeger inequality for directed graphs. 
This perspective is consistent with the role of the left zero-eigenvector in directed consensus and synchronization. 
The generalized algebraic connectivity $\kappa$ used in this paper follows the same principle: it combines a Laplacian-type quadratic form with the stationary weighting induced by the directed network. 
Thus, $\kappa$ is not merely a numerical proxy for connectivity, but a synchronization-relevant directed spectral quantity.

Nevertheless, most existing results on directed consensus and synchronization use generalized connectivity quantities as global performance or convergence indicators. 
They clarify why such quantities are useful for certifying consensus, synchronization, or convergence rates, but they do not provide an edgewise perturbation law describing how a local directed edge modification changes the generalized algebraic connectivity. 
This edge-level sensitivity question is the main focus of the present paper.

\subsection{Signed and Asymmetric Network Interactions}
\label{subsec:rw_signed}

Many real-world networks contain both cooperative and antagonistic interactions. 
Signed networks have therefore become an important framework for modeling disagreement, polarization, opinion separation, and antagonistic coordination~\cite{altafini2013consensus,Meng2022Disagreement}. 
In signed consensus theory, negative edges do not simply destroy collective behavior; rather, under structural-balance conditions, they may lead to bipartite consensus, where agents agree in modulus but split into two groups with opposite signs~\cite{altafini2013consensus}. 
More generally, signed interactions can generate structural-balance-dependent consensus or disagreement behavior. 
This line of work demonstrates that the sign and direction of an interaction can qualitatively change network dynamics.

For synchronization, negative interactions have also been shown to produce counterintuitive effects. 
Nishikawa and Motter~\cite{nishikawa2010network} showed that negative interactions and compensatory structures can improve synchronizability in certain networks, challenging the conventional intuition that synchronizing interactions should always be positive or purely cooperative. 
Similarly, directed link manipulation can be exploited to improve synchronization. 
Zeng, L{\"u}, and Zhou~\cite{zeng2012manipulating} showed that PageRank-guided link addition, removal, and rewiring can enhance directed-network synchronizability by exploiting hierarchy and core--receptor structures.

These results collectively suggest that synchronization in asymmetric networks is governed by a more delicate balance than in undirected graphs. 
Local changes that appear harmful from the viewpoint of coupling strength may still improve a global spectral or dynamical quantity by reshaping the effective influence pattern of the network. 
However, most works in this direction focus on global synchronizability landscapes, structural heuristics, or specific classes of link modifications. 
They do not provide a general first-order edgewise formula explaining how a directed edge perturbation changes a generalized algebraic-connectivity-type quantity.

\subsection{Synchronization-Oriented Network Modification}
\label{subsec:rw_modification}

A growing body of work has studied how structural modifications affect synchronization and consensus. 
In undirected networks, edge addition, edge rewiring, and related graph-augmentation strategies are natural tools for improving algebraic connectivity and convergence rates~\cite{ghosh2006growing,sydney2013optimizing}. 
In directed networks, however, the effect of edge modification can be non-monotone and topology-dependent.

Several studies have reported such nontrivial effects. 
Zhang, Chen, and Mo~\cite{zhang2017effect} analyzed the effect of adding edges to consensus networks with directed acyclic graphs and showed that arc addition can influence convergence in ways determined by the special graph structure. 
Gao et al.~\cite{gao2023effects,gao2023effects2} studied leader--follower multi-agent systems and showed that adding arcs or changing arc weights can affect the consensus convergence rate.
In particular, decreasing certain arc weights may accelerate convergence in structured leader--follower networks. 
Bakrani et al.~\cite{bakrani2024cycle} investigated motif-level responses to link modification and showed that local cycle--star structures can explain certain spectral responses.

These results are important because they demonstrate that link modification in directed networks can have effects that differ sharply from undirected intuition. 
However, many of them rely on special graph families, such as directed acyclic graphs, leader--follower networks, or motif-based structures. 
The corresponding conclusions are often precise and informative within their specific settings, but they do not yield a general perturbative law for arbitrary directed weighted networks.

Another line of work uses structural descriptors and ranking heuristics, such as edge betweenness, assortativity, and PageRank-type scores, to characterize important nodes, edges, or mixing patterns in networks~\cite{girvan2002community,newman2002assortative,gleich2015pagerank}. 
Such metrics are useful for describing network structure and guiding heuristic interventions, but they do not directly quantify the first-order variation of a synchronization-relevant spectral quantity. 
This limitation motivates the sensitivity-based approach developed in this paper.

\subsection{Perturbation and Spectral Network Design}
\label{subsec:rw_sensitivity}

Perturbation analysis provides a natural way to connect local structural changes with global spectral performance. 
For symmetric matrices and undirected Laplacians, eigenvalue perturbation formulas directly relate edge-weight changes to Fiedler-vector differences. 
This connection has been used in algebraic-connectivity maximization, graph augmentation, and network design~\cite{ghosh2006growing,sydney2013optimizing}. 
For directed networks, however, perturbation analysis is more subtle because the stationary distribution and possible eigenvalue branch switching must be considered.

Existing studies have used matrix perturbation theory to analyze synchronization enhancement in networks. 
For example, He et al.~\cite{he2019perturbation} compared several enhancement methods, including link removal, node removal, hub division, pull control, and pinning control, by deriving perturbation estimates for the ordinary Laplacian eigenvalues $\lambda_2$ and $\lambda_N$. 
Their results show that perturbation analysis can provide useful local predictions for synchronization-related spectral changes. 
However, their analysis does not address the generalized algebraic connectivity of directed weighted networks, where the invariant measure also changes under edge perturbations. 
In parallel, structural genericity results for Laplacian spectra indicate that simple eigenvalues and nondegenerate Fiedler structures are natural rather than exceptional in many weighted-graph settings~\cite{PPP18}. 
These observations motivate our local differentiable analysis of $\kappa$ under suitable nondegeneracy assumptions.

The present paper builds on this perturbation viewpoint but differs from existing spectral-design work in two respects. 
First, we focus on the generalized algebraic connectivity $\kappa$ of directed weighted networks, rather than on the classical undirected $\lambda_2$ or only on the  minimum real part of a nonsymmetric Laplacian eigenvalue $\gamma$. 
Second, our formula explicitly separates the first-order response into a local directed cut-energy term and a global stationary redistribution term. 
The latter term has no analogue in the classical undirected setting and is precisely the mechanism that allows edge weakening to improve $\kappa$ in directed networks.

\begin{table*}[!t]
\centering
\caption{Comparison with representative related work on synchronization, directed connectivity, and network modification.}
\label{tab:related_work_comparison}
\scriptsize
\setlength{\tabcolsep}{3pt}
\renewcommand{\arraystretch}{1.15}
\begin{tabularx}{\textwidth}{
>{\raggedright\arraybackslash}p{1.85cm}
>{\raggedright\arraybackslash}p{1.85cm}
>{\raggedright\arraybackslash}p{2.05cm}
>{\raggedright\arraybackslash}p{2.35cm}
>{\raggedright\arraybackslash}p{4cm}
>{\raggedright\arraybackslash}X}
\toprule
Reference  & Network type & Modification / design focus & Main spectral or dynamical quantity & Main insight & Difference from this work \\
\midrule

\cite{fiedler1973algebraic} 
& Undirected graphs 
& None; spectral characterization 
& Algebraic connectivity $\lambda_2$ 
& Introduced $\lambda_2$ as a fundamental measure of graph connectedness. 
& Does not address directed asymmetry or signed weights. \\
\midrule
\cite{olfati2007consensus}
& Multi-agent networks
& Consensus and cooperation
& Laplacian spectrum and left zero-eigenvector
& Shows that strongly connected digraphs reach weighted consensus determined by the left zero-eigenvector.
& Focuses on consensus values; our work studies how directed edge perturbations change $\kappa$ through local cut-energy and stationary redistribution. \\
\midrule
\cite{nishikawa2010network}
& Weighted oscillator networks
& Link removal and negative interactions
& Synchronizability landscape
& Revealed compensatory structures and positive effects of negative interactions.
& Provides global structural insight, but not a general first-order edgewise formula for directed $\kappa$. \\
\midrule
\cite{zeng2012manipulating}
& Directed unweighted networks
& Link addition, removal, and rewiring
& Real-part Laplacian eigenratio
& Uses PageRank-based centrality to guide link manipulation for improving directed-network synchronizability.
& Heuristic structural manipulation; our work provides an edgewise $\kappa$-sensitivity theory for directed weighted perturbations. \\
\midrule
\cite{zhang2017effect}
& Directed acyclic consensus networks
& Arc addition
& Consensus convergence rate
& Characterized how adding edges affects convergence in DAG-based consensus networks.
& Relies on special graph topology; our framework applies under spectral nondegeneracy assumptions. \\
\midrule
\cite{he2019perturbation}
& Oscillator networks
& Synchronization enhancement methods
& Perturbations of $\lambda_2$ and $\lambda_N$
& Compares several enhancement methods using Laplacian eigenvalue perturbation.
& Does not address general directed $\kappa$ or stationary redistribution under edge perturbations. \\
\midrule
\cite{gao2023effects,gao2023effects2}
& Leader--follower directed networks
& Arc addition and arc-weight change
& Consensus convergence rate
& Showed that changing or reducing selected arc weights can improve convergence in structured digraphs.
& Strongly tied to leader--follower topology; our approach is a general perturbative framework. \\
\midrule
\cite{bakrani2024cycle}
& Motif-based networks
& Link modification
& Motif spectral response
& Explained network response through cycle--star motifs.
& Motif-level mechanism; our formula gives a whole-network edgewise sensitivity for $\kappa$. \\
\midrule
\cite{liu2008synchronization,lu2009synchronisation}
& Directed coupled networks
& Synchronization criteria
& Left zero-eigenvector and weighted Lyapunov analysis
& Highlighted the role of the invariant vector in directed synchronization.
& Motivates our use of $\xi$, but does not quantify how edge perturbations change $\kappa$. \\
\midrule
\cite{yu2009second,yu2010some}
& Directed second-order multi-agent systems
& Consensus conditions
& Generalized algebraic connectivity
& Showed that generalized connectivity enters directed second-order consensus conditions.
& Uses $\kappa$-type quantities as global criteria; our work derives their edgewise sensitivity. \\
\midrule
\cite{ghosh2006growing,sydney2013optimizing}
& Undirected graphs
& Edge addition and rewiring
& Algebraic connectivity $\lambda_2$ 
& Uses Fiedler-vector-based rules to improve $\lambda_2$ by adding or rewiring edges.
& Relies on the symmetry of undirected Laplacians; our work studies directed edge perturbations of $\kappa$ and identifies the stationary redistribution effect. \\
\midrule
\cite{altafini2013consensus}
& Signed networks
& Antagonistic interactions
& Signed consensus and structural balance
& Demonstrated that negative interactions can induce nonstandard collective behavior.
& Focuses on structural-balance conditions; our work treats negative-edge insertion as a $\kappa$-sensitivity-guided edge perturbation. \\
\midrule
\cite{girvan2002community,newman2002assortative,gleich2015pagerank}
& General networks
& Structural descriptors and ranking heuristics
& Betweenness, assortativity, PageRank
& Characterize node/edge importance and network mixing patterns.
& They do not directly predict the first-order change of $\kappa$ under directed edge perturbations. \\
\midrule
This paper
& Directed weighted and signed networks
& Edge weakening, deletion, negative insertion, and strengthening
& Generalized algebraic connectivity $\kappa$
& Derives an explicit edgewise first-order sensitivity formula and decomposes it into directed cut-energy and stationary redistribution.
& Provides a unified perturbation mechanism and sensitivity-guided network design algorithms. \\
\bottomrule
\end{tabularx}
\end{table*}

\subsection{Lessons Learned and Positioning of This Work}
\label{subsec:rw_lessons}

The above literature suggests several lessons that motivate the present work.

\begin{itemize}[leftmargin=*]
    \item In undirected networks, algebraic connectivity gives a clean monotone principle: symmetric edge strengthening improves the connectivity-related spectral quantity.
    
    \item This monotone principle is not generally valid in directed networks because a directed edge perturbation modifies both local coupling and the global stationary distribution.
    
    \item Directed consensus and synchronization theory shows that the left zero-eigenvector and the generalized algebraic connectivity $\kappa$ are dynamically meaningful quantities in asymmetric networks.
    
    \item Existing counterintuitive examples, including link removal, weight reduction, directionality manipulation, and negative interactions, indicate that weaker local coupling may improve global synchronization-related performance.
    
    \item Many available explanations are topology-specific, relying on special graph families, leader--follower structures, motifs, or numerical synchronizability landscapes.
    
    \item A general edgewise perturbation law is therefore needed to explain when and why a local directed edge modification improves a global synchronization-related spectral quantity.
\end{itemize}

Motivated by these lessons, this paper develops a perturbation-based $\kappa$-sensitivity framework for directed weighted networks. 
Our main theoretical contribution is an explicit first-order formula for the variation of $\kappa$ under edge-weight perturbations. 
The formula decomposes the sensitivity into a directed cut-energy term and a stationary redistribution term. 
This decomposition explains why weakening a carefully selected directed edge may increase $\kappa$, and it also provides a principled score for edge weakening, edge deletion, negative-edge insertion, and budget-constrained edge strengthening. 
Unlike topology-specific results, the proposed framework applies to general directed weighted networks under spectral nondegeneracy assumptions. 
Unlike purely structural heuristics, it directly quantifies the local spectral effect of edge-level interventions.

\section{Problem Formulation and Preliminaries}\label{sec:Preliminaries}

\subsection{Graph Model and Notation}

We consider a directed weighted graph $G=(V,E,A)$ with $n$ vertices, where 
$V=\{1,\ldots,n\}$
is the vertex set, $E\subseteq V\times V$ is the directed edge set, and 
$A=(a_{ij})_{i,j=1}^n$
is the weighted adjacency matrix. Throughout this paper, we restrict attention to \emph{simple directed graphs}, namely graphs without self-loops and with at most one directed edge between any ordered pair of vertices.

For a directed edge $j\to i\in E$, the entry $a_{ij}$ denotes its weight. In general, $A$ is not symmetric, and the weights $a_{ij}$ are allowed to be positive, zero, or negative. For each vertex $i$, we define its signed in-degree and signed out-degree by
\[
k_i^{\mathrm{in}}=\sum_{j=1}^n a_{ij},
\qquad
k_i^{\mathrm{out}}=\sum_{j=1}^n a_{ji}.
\]
The Laplacian matrix associated with $G$ is defined as
\begin{align}\label{eq:L}
L = D-A,
\qquad
D=\mathrm{diag}(k_1^{\mathrm{in}},\ldots,k_n^{\mathrm{in}}).
\end{align}
By construction, the rows of $L$ sum to zero, and hence
$L\mathbf{1}=0$,
where $\mathbf{1}=(1,\ldots,1)^\top\in\mathbb{R}^n$.

Throughout the paper, we impose the following standing assumptions.

\begin{assumption}\label{ass:A1}
The Laplacian matrix $L$ has a simple eigenvalue at $0$, and every other eigenvalue $\lambda\in\sigma(L)\setminus\{0\}$ satisfies
\[
\Re(\lambda)>0.
\]
\end{assumption}

\begin{assumption}\label{ass:A2}
The left eigenvector $\xi$ associated with the eigenvalue $0$ of $L$ can be chosen entrywise positive and is normalized by $\mathbf{1}^\top \xi =1$.
\end{assumption}

\begin{remark}
For directed graphs with nonnegative weights, Assumptions~\ref{ass:A1} and \ref{ass:A2} are satisfied under standard connectivity conditions. 
For example, if the graph contains a directed spanning tree, then $0$ is a simple eigenvalue of $L$ and all other eigenvalues have positive real parts; if the graph is strongly connected, then the corresponding left eigenvector associated with $0$ can be chosen strictly positive. 
Since the present paper allows signed edge weights, these graph-theoretic conditions are no longer sufficient in general, and we therefore impose Assumptions~\ref{ass:A1} and \ref{ass:A2} directly.
\end{remark}

For directed weighted networks, the classical algebraic connectivity of undirected graphs is no longer directly applicable. 
Following the generalized spectral framework used in directed consensus and synchronization analysis~\cite{liu2008synchronization,lu2009synchronisation,yu2009second,yu2010some}, we consider the following symmetrized matrix associated with $L$ and $\xi$.

Let $\Xi=\mathrm{diag}(\xi_1,\ldots,\xi_n)$,
and define
\begin{align}\label{eq:M}
\widehat{\Xi L}
=
\frac{\Xi L+L^\top \Xi}{2},
\qquad
M
=
\Xi^{-1/2}\widehat{\Xi L}\,\Xi^{-1/2}.
\end{align}
The matrix $M$ is real symmetric. To ensure that the generalized algebraic connectivity is well-defined and nondegenerate, we impose the following assumption.

\begin{assumption}\label{ass:A3}
The matrix $M$ is positive semidefinite, and its second smallest eigenvalue
\[
\kappa\coloneqq \lambda_2(M)
\]
satisfies $\kappa>0$ and is simple.
\end{assumption}

\begin{remark}
Assumptions~\ref{ass:A1}--\ref{ass:A3} may be viewed as nondegeneracy assumptions on the directed weighted network. In the nonnegative-weight setting, part of this structure is supported by known genericity results on graph Laplacians. In particular, Poignard, Pereira, and Pade~\cite{PPP18} proved that, for connected undirected weighted graphs, having simple Laplacian eigenvalues is structurally generic, and so is the property that the Fiedler vector has no zero entries. They also showed that, for strongly connected directed weighted graphs, being diagonalizable with simple Laplacian eigenvalues is structurally generic. These results do not directly imply Assumptions~\ref{ass:A1}--\ref{ass:A3} in our signed directed setting, since our analysis involves the symmetrized matrix $M$,
and we also allow signed edge weights. Nevertheless, they indicate that the simplicity and nondegeneracy requirements imposed here are natural from a structural point of view, rather than exceptional spectral restrictions. In particular, the simplicity of the synchronization-relevant spectral quantity is generic in the positive-weight case, which supports the interpretation of Assumption~\ref{ass:A3} as a nonpathological spectral hypothesis.
\end{remark}

Under Assumption~\ref{ass:A3}, the eigenvalues of $M$ can be ordered as 
$0=\lambda_1(M)<\lambda_2(M) < \lambda_{3}(M) \le \cdots \le \lambda_n(M)$,
and the quantity
\begin{align}\label{eq:kappa}
\kappa\coloneqq \lambda_2(M)
\end{align}
is called the \emph{generalized algebraic connectivity} of the directed weighted network.

Let $v$ be the normalized eigenvector of $M$ associated with $\kappa$, i.e.,
$Mv=\kappa v$, $\|v\|_2=1$,
and define $y=\Xi^{-1/2}v$.

\subsection{Problem Statement}

We are interested in how edge-level modifications of a directed weighted network affect its synchronization-related spectral properties. 
In particular, we study the first-order sensitivity of the generalized algebraic connectivity $\kappa$ with respect to signed perturbations of selected directed edges.

Given a directed weighted graph $G=(V,E,A)$, our goals are:
\begin{itemize}
    \item to characterize the first-order variation of $\kappa$ under edge-level perturbations;
    \item to identify the edge-wise contributions that increase or decrease $\kappa$;
    \item to develop sensitivity-guided strategies for synchronization enhancement through edge modifications.
\end{itemize}

A central question motivating this work is whether counterintuitive modifications, such as weakening selected edges or introducing negative interactions, can improve synchronization-related spectral performance in directed weighted networks, and whether such improvements may be accompanied by better synchronization behavior in specific nonlinear systems.
To address this question, we consider perturbations supported on a directed edge set $F$ defined on the vertex set $V$, where $F$ is not necessarily a subgraph of the original graph $G$ and may include newly introduced edges. 
The perturbed adjacency matrix is defined by
\begin{align}\label{eq:Ae}
A(\epsilon)
=
A+\epsilon\sum_{j\to i\in F}E_{ij},
\end{align}
where $\epsilon\in\mathbb{R}$ is sufficiently small. 
Under this convention, positive $\epsilon$ corresponds to strengthening the selected edges or inserting positive weights on previously absent edges, whereas negative $\epsilon$ corresponds to weakening existing edges or inserting negative weights on previously absent edges.

Let $L(\epsilon)$, $\xi(\epsilon)$, $\Xi(\epsilon)$, $M(\epsilon)$, and $\kappa(\epsilon)$ denote the corresponding perturbed quantities induced by~\eqref{eq:Ae}. 
Our objective is to derive an explicit first-order perturbation formula for
\[
\partial_F\kappa
\coloneqq 
\left.\frac{d\kappa(\epsilon)}{d\epsilon}\right|_{\epsilon=0},
\]
to interpret its structural meaning, and to use it for synchronization-oriented network modification.

\section{Perturbation Analysis and Theoretical Results}\label{sec:Perturbation}

This section develops a perturbation-based analysis of the generalized algebraic connectivity $\kappa$ under the edge-weight perturbation model~\eqref{eq:Ae}. 
Our main goal is to derive an explicit first-order sensitivity formula and to show that it admits a decomposition into two interpretable terms. 
This decomposition will clarify how local edge imbalance and global stationary redistribution jointly determine the synchronization role of each directed interaction.

\subsection{First-Order Perturbation Formula}

We consider the perturbation model introduced in~\eqref{eq:Ae}.
For notational convenience, we write
$\partial_e \kappa$ instead of $\partial_{\{e\}} \kappa$ to denote the contribution of a single directed edge perturbation $e=(j\to i)$.

The following theorem gives the first-order perturbation formula for $\kappa$.

\begin{theorem}\label{thm:kappa}
    Under Assumptions~\ref{ass:A1}--\ref{ass:A3}, for every directed edge set $F$ on $V$, the generalized algebraic connectivity $\kappa$ satisfies the first-order perturbation formula
    \begin{align}\label{eq:de1}
        \partial_F \kappa
        =
        \underbrace{
        \sum_{j\to i\in F}
        \xi_i\,y_i\,(y_i-y_j)
        }_{\text{Directed Cut Energy}}
        +
        \underbrace{
        y^\top
        (\partial_F \Xi)
        (L-\kappa I)y
        }_{\text{Stationary Redistribution Effect}},
    \end{align}
    where $y \coloneqq \Xi^{-1/2}v$ and $v$ is the normalized eigenvector of $M$ associated with $\kappa$.
\end{theorem}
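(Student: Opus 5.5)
The plan is to recast the eigenproblem for $M$ as a symmetric generalized eigenproblem and then apply a Hellmann--Feynman type argument. Write $S\coloneqq\widehat{\Xi L}=\tfrac12(\Xi L+L^\top\Xi)$. Since $M=\Xi^{-1/2}S\,\Xi^{-1/2}$ and $y=\Xi^{-1/2}v$, the relation $Mv=\kappa v$ with $\|v\|_2=1$ is equivalent to
\[
S y=\kappa\,\Xi y,\qquad y^\top \Xi y=1 .
\]
Both $S$ and $\Xi$ are symmetric, and $\Xi$ is positive definite. Differentiating this pencil should isolate the two terms in~\eqref{eq:de1} directly.

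\textbf{Step 1 (regularity).} First I would show that $\epsilon\mapsto\kappa(\epsilon)$ and an associated eigenvector are differentiable at $\epsilon=0$. By Assumption~\ref{ass:A1}, $0$ is a simple eigenvalue of $L(\epsilon)$ for $\epsilon$ near $0$, because $L(\epsilon)\mathbf 1=0$ persists and simplicity is an open condition. The left null vector normalized by $\mathbf 1^\top\xi(\epsilon)=1$ is then analytic in $\epsilon$. One way to see this is via the Riesz projector onto the zero eigenvalue. Another is to solve the nonsingular bordered system $\begin{pmatrix}L(\epsilon)^\top & \mathbf 1\\ \mathbf 1^\top&0\end{pmatrix}$ and apply Cramer's rule. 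By continuity, $\xi(\epsilon)$ stays entrywise positive (Assumption~\ref{ass:A2}), so $\Xi(\epsilon)^{\pm1/2}$ and $M(\epsilon)$ are analytic. Since $\kappa$ is a simple eigenvalue of the symmetric matrix $M$ (Assumption~\ref{ass:A3}), standard analytic perturbation theory gives analytic $\kappa(\epsilon)$ and $v(\epsilon)$, and hence analytic $y(\epsilon)$. Simplicity also guarantees that $\kappa(\epsilon)$ remains the second smallest eigenvalue for small $|\epsilon|$, so no branch switching occurs. I expect this step to be the main technical obstacle, chiefly the careful justification that $\partial_F\Xi$ exists. The final formula only requires that it exist, not its explicit form.

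\textbf{Step 2 (Hellmann--Feynman).} Next I would differentiate $S(\epsilon)y(\epsilon)=\kappa(\epsilon)\Xi(\epsilon)y(\epsilon)$ at $\epsilon=0$ and multiply on the left by $y^\top$. The terms containing $\dot y$ cancel, because $y^\top(S-\kappa\Xi)=0$ by symmetry of $S$ and $\Xi$. Using $y^\top\Xi y=1$ this gives
\[
\partial_F\kappa=y^\top(\partial_F S)y-\kappa\,y^\top(\partial_F\Xi)y .
\]

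\textbf{Step 3 (expanding $\partial_F S$).} We have $\partial_F S=\tfrac12\big(\partial_F\Xi\,L+\Xi\,\partial_F L+(\cdot)^\top\big)$. For any matrix $B$, $y^\top B^\top y=y^\top B y$, so
\[
y^\top(\partial_F S)y=y^\top(\partial_F\Xi)Ly+y^\top\Xi(\partial_F L)y .
\]
Combining the first piece with $-\kappa\,y^\top(\partial_F\Xi)y$ produces exactly the stationary redistribution term $y^\top(\partial_F\Xi)(L-\kappa I)y$.

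For the remaining piece, the model~\eqref{eq:Ae} adds $\epsilon$ to $a_{ij}$ and hence to $k_i^{\mathrm{in}}$ for each edge $j\to i\in F$. Therefore
\[
\partial_F L=\sum_{j\to i\in F}(E_{ii}-E_{ij}),
\]
which gives
\[
y^\top\Xi(\partial_F L)y=\sum_{j\to i\in F}\xi_i\,y_i(y_i-y_j),
\]
the directed cut-energy term. This completes~\eqref{eq:de1}. The resulting expression is additive over the edges in $F$, through both terms, since $\partial_F\Xi=\sum_{e\in F}\partial_e\Xi$ by linearity of the derivative of $\xi$ in the perturbation direction.
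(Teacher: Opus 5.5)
Your proposal is correct and follows essentially the same route as the paper: analyticity of $\xi(\epsilon)$ via the bordered system, recasting as the symmetric pencil $(\widehat{\Xi L},\Xi)$, the Hellmann--Feynman identity $\partial_F\kappa=y^\top(\partial_F\widehat{\Xi L}-\kappa\,\partial_F\Xi)y$, and expanding the derivative into the cut-energy and stationary-redistribution terms. The only minor differences are that you get analyticity of $\kappa(\epsilon)$ from the standard symmetric eigenproblem for $M(\epsilon)$ rather than the generalized Hermitian perturbation theorem the paper cites, and that you explicitly note $\kappa(\epsilon)$ stays the second smallest eigenvalue; both are equivalent here.
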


The proof is deferred to the Appendix.

\begin{remark}
Under the sign convention in~\eqref{eq:Ae}, the derivative $\partial_F\kappa$ describes the first-order effect of \emph{increasing} the weights of the directed edges in $F$. 
Consequently, if one is interested in \emph{weakening} a set of edges, corresponding to $\epsilon<0$, then the first-order change in $\kappa$ has the opposite sign. 
In particular, an edge set with $\partial_F\kappa<0$ is beneficial to weaken, since a negative perturbation then increases $\kappa$ to first order.
\end{remark}

\subsection{Structural Interpretation of the Sensitivity}

The perturbation formula~\eqref{eq:de1} shows that the first-order sensitivity of $\kappa$ consists of two qualitatively different contributions. 
The first is a local edge term depending only on the perturbed edges and the generalized Fiedler geometry, while the second is a global correction arising from the perturbation of the stationary distribution.

\subsubsection{Directed Cut Energy}

The first term in~\eqref{eq:de1},
\[
\sum_{j\to i\in F}\xi_i\,y_i\,(y_i-y_j),
\]
can be viewed as a directed cut-energy term. 
It depends only on edgewise differences of the transformed eigenvector $y$ across the perturbed directed edges and therefore reflects the local effect of the perturbation on the asymmetric flow structure of the network.

More specifically, the factor $(y_i-y_j)$ measures the disagreement across the directed edge $j\to i$, while the weight $\xi_i y_i$ incorporates both the stationary distribution and the spectral structure of the generalized Fiedler mode. 
Hence, this term quantifies how strongly the perturbed edges contribute to the local synchronization geometry. 
Under the perturbation convention~\eqref{eq:Ae}, a positive value of this term indicates that strengthening the corresponding edge set tends to increase $\kappa$, whereas a negative value indicates that weakening those edges tends to increase $\kappa$, and therefore improves the synchronization-related spectral quantity considered in this work.

\subsubsection{Stationary Redistribution Effect}

The second term in~\eqref{eq:de1},
\[
y^\top(\partial_F\Xi)(L-\kappa I)y,
\]
captures the effect of the perturbation on the stationary distribution $\xi$. 
Unlike the directed cut-energy term, which is purely local, this contribution reflects a global redistribution effect induced by the directed nature of the network.

In a directed graph, the stationary distribution depends on the entire network structure. 
As a consequence, even a localized edge-weight perturbation may alter $\xi$ globally, and this redistribution in turn feeds back into the value of $\kappa$. 
This phenomenon has no counterpart in the classical undirected setting, where $\Xi$ reduces to a scalar multiple of the identity and therefore remains invariant under edge perturbations.

\subsection{Vanishing of the Stationary Redistribution Effect}

The decomposition in Theorem~\ref{thm:kappa} naturally raises the question of when the second term vanishes, so that the first-order variation of $\kappa$ is determined entirely by the local directed cut-energy term.

\begin{proposition}\label{prop:CD}
Define
\begin{align}
C\coloneqq \Xi L-L^\top\Xi,
\qquad
D\coloneqq  (\partial_F\Xi)\Xi^{-1}.
\end{align}
Then the stationary redistribution term $R\coloneqq y^\top(\partial_F\Xi)(L-\kappa I)y$
can be rewritten as
\begin{align}\label{eq:RDC}
R
=
\frac12\, y^\top D C y.
\end{align}
\end{proposition}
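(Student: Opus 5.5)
The identity comes from rewriting the eigenvalue equation for $M$ in terms of $y$, which will show that the residual vector $(L-\kappa I)y$ is governed entirely by the skew-symmetric part $C$ of $\Xi L$. No perturbation theory beyond the existence of $\partial_F\Xi$ should be needed. That derivative exists because, by Assumption~\ref{ass:A1}, $0$ is a simple eigenvalue of $L(\epsilon)$. Hence the normalized left eigenvector $\xi(\epsilon)$ depends smoothly on $\epsilon$ near $0$, so $\partial_F\Xi=\mathrm{diag}(\partial_F\xi)$ is a well-defined diagonal matrix. By Assumption~\ref{ass:A2}, $\Xi$ is invertible, so $D$ is well defined.

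First, translate $Mv=\kappa v$ into an equation for $y=\Xi^{-1/2}v$. Substituting $v=\Xi^{1/2}y$ into the definition \eqref{eq:M} of $M$ gives
\[
\Xi^{-1/2}\,\frac{\Xi L+L^\top\Xi}{2}\,y=\kappa\,\Xi^{1/2}y .
\]
Multiplying on the left by $\Xi^{1/2}$ yields the generalized eigenvalue relation
\[
\tfrac12(\Xi L+L^\top \Xi)\,y=\kappa\,\Xi y .
\]

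Next, split $\Xi L$ into its symmetric and skew-symmetric parts:
\[
\Xi L=\tfrac12(\Xi L+L^\top\Xi)+\tfrac12 C .
\]
Applying this to $y$ and using the relation above gives
\[
\Xi L y=\kappa\,\Xi y+\tfrac12 Cy,
\qquad\text{that is,}\qquad
\Xi(L-\kappa I)y=\tfrac12 Cy .
\]
Since $\Xi$ is invertible, $(L-\kappa I)y=\tfrac12\,\Xi^{-1}Cy$.

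Finally, substitute this into $R$:
\[
R=y^\top(\partial_F\Xi)(L-\kappa I)y=\tfrac12\,y^\top(\partial_F\Xi)\Xi^{-1}Cy=\tfrac12\,y^\top DCy,
\]
which is \eqref{eq:RDC}. Because $\partial_F\Xi$ and $\Xi^{-1}$ are both diagonal, they commute, so the ordering in $D$ is immaterial.

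The only step requiring care is the first one: using the symmetric eigen-equation for $M$ to convert $(L-\kappa I)y$ into $\tfrac12\Xi^{-1}Cy$. This step uses only the eigenvector property of $v$, not any property of $\partial_F\Xi$. The remaining steps are direct substitutions.
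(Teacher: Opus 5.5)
Your proof is correct and follows the paper's argument essentially step for step: you convert $Mv=\kappa v$ into $\tfrac12(\Xi L+L^\top\Xi)\,y=\kappa\,\Xi y$, split $\Xi L$ into its symmetric part plus $\tfrac12 C$ to get $(L-\kappa I)y=\tfrac12\Xi^{-1}Cy$, and substitute into $R$. Your extra remarks are accurate but not needed for the identity itself: the well-definedness of $\partial_F\Xi$ is established in the paper by Lemma~\ref{lem:xi-analytic}, and the two diagonal factors in $D$ do commute.
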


The proof is given in the Appendix.

Proposition~\ref{prop:CD} immediately yields the following corollary.

\begin{corollary}\label{cor:vanish}
The stationary redistribution term vanishes if either of the following conditions holds:
\begin{itemize}
    \item $C=0$, that is, $\Xi L=L^\top\Xi$,
    which corresponds to the detailed-balance (reversible) case;

    \item $D=0$, equivalently, $\partial_F\xi=0$,
    i.e., the stationary distribution is invariant to first order under the perturbation supported on $F$.
\end{itemize}
\end{corollary}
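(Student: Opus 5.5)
The corollary is a direct consequence of Proposition~\ref{prop:CD}: by \eqref{eq:RDC}, $R=\tfrac12\,y^\top D C y$ is bilinear in the matrices $D$ and $C$. So it suffices to check, for each listed condition, that the relevant matrix vanishes and to interpret that condition as stated. No new spectral computation is needed.

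For the first bullet, suppose $C=\Xi L-L^\top\Xi=0$. Then $DC=0$ and hence $R=0$. To justify the name ``detailed balance'', I would compare entries. The $(i,j)$ entry of $\Xi L=L^\top\Xi$ reads $\xi_i L_{ij}=L_{ji}\xi_j$. For $i\neq j$ this is $\xi_i a_{ij}=\xi_j a_{ji}$, which is the detailed-balance (reversibility) condition for the flow weighted by the invariant measure $\xi$. The diagonal entries agree trivially. Consequently $\widehat{\Xi L}=\Xi L$, which is consistent with the reversible case.

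For the second bullet, note that $\Xi$ is diagonal with positive entries by Assumption~\ref{ass:A2}, so $\Xi^{-1}$ exists. Moreover $\partial_F\Xi=\operatorname{diag}(\partial_F\xi)$. Since $\Xi^{-1}$ is invertible, $D=(\partial_F\Xi)\Xi^{-1}=0$ holds if and only if $\partial_F\Xi=0$, that is, if and only if $\partial_F\xi=0$. In that case $R=0$ directly, even without appealing to \eqref{eq:RDC}, because $R=y^\top(\partial_F\Xi)(L-\kappa I)y$.

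The only step requiring care is the existence of $\partial_F\xi$. Here I would note that under Assumption~\ref{ass:A1} the zero eigenvalue of $L(\epsilon)$ is simple, so the normalized left null vector $\xi(\epsilon)$ depends analytically on $\epsilon$ near $0$. This is the same regularity already used in Theorem~\ref{thm:kappa}, so I do not expect any real obstacle; the whole argument is essentially a one-line consequence of Proposition~\ref{prop:CD}.
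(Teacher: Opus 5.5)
Your proposal is correct and follows the paper's route: the paper also reads the corollary off directly from $R=\tfrac12\,y^\top DCy$ in Proposition~\ref{prop:CD}, where $C=0$ or $D=0$ forces $R=0$. Your additional checks are accurate and consistent with the paper's surrounding Remark and Lemma~\ref{lem:xi-analytic}: the entrywise identity $\xi_i a_{ij}=\xi_j a_{ji}$, the equivalence $D=0\iff\partial_F\xi=0$ via invertibility of $\Xi$, and the existence of $\partial_F\xi$.
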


\begin{remark}
In the nonnegative-weight setting, the condition $C=\Xi L-L^\top\Xi=0$
has a natural combinatorial interpretation. Indeed, for $i\neq j$ one has $C_{ij}=-(\xi_i a_{ij}-\xi_j a_{ji})$,
so $C=0$ is equivalent to the detailed-balance relations $\xi_i a_{ij}=\xi_j a_{ji}$, $\forall\, i,j$.
Thus, the stationary flux through each pair of opposite directed edges is balanced.

A classical consequence is Kolmogorov's cycle condition: for every directed cycle
$i_1\to i_2\to \cdots \to i_m \to i_1$, with $i_{m+1}=i_1$, one must have
\[
\prod_{r=1}^{m} a_{i_{r+1}i_r}
=
\prod_{r=1}^{m} a_{i_ri_{r+1}}.
\]
Indeed, applying the detailed-balance relation to the pair
$(i_{r+1},i_r)$ gives
\[
\xi_{i_{r+1}}a_{i_{r+1}i_r}
=
\xi_{i_r}a_{i_ri_{r+1}},
\qquad r=1,\dots,m.
\]
Multiplying these identities around the cycle, all $\xi_{i_r}$ cancel
telescopically, yielding the above product equality.

Conversely, if the graph is strongly connected, all weights are nonnegative,
and the cycle condition holds for every directed cycle, then one can recover
a positive vector $\xi$ satisfying the detailed-balance equations. A standard
construction is as follows: fix a root node $r$, set $\xi_r=1$, and for any
node $i$ choose a directed path $r=i_0\to i_1\to \cdots \to i_k=i$.
Define
\[
\xi_i
:=
\prod_{s=0}^{k-1}
\frac{a_{i_si_{s+1}}}{a_{i_{s+1}i_s}}.
\]
The cycle condition guarantees that this definition is path-independent, and
one then verifies that $\xi_i a_{ij}=\xi_j a_{ji}$ for every pair $i,j$.
Hence, after normalization, $C=0$.

Therefore, in the strongly connected nonnegative-weight case, the matrix
condition $C=0$ is equivalent to the statement that every directed cycle has
the same multiplicative weight in the forward and reverse directions; see,
e.g., the classical reversibility criterion in \cite{kelly1979reversibility}.
\end{remark}

\begin{remark}
Since $C$ is skew-symmetric, one has $y^\top C y=0$, and hence $R=\frac12\,y^\top\!\left(D-\frac{1}{n}\operatorname{tr}(D)I\right)\!Cy$.
Therefore,
\begin{align*}
|R|
\le
\frac12\left\|D-\frac{1}{n}\operatorname{tr}(D)I\right\|
\,\|C\|\,\|y\|^2.
\end{align*}
This estimate shows that the stationary redistribution effect is small whenever the graph is close to reversible or the stationary redistribution is nearly uniform.
\end{remark}

\begin{figure}[htbp]
    \centering
    \includegraphics[width=\linewidth]{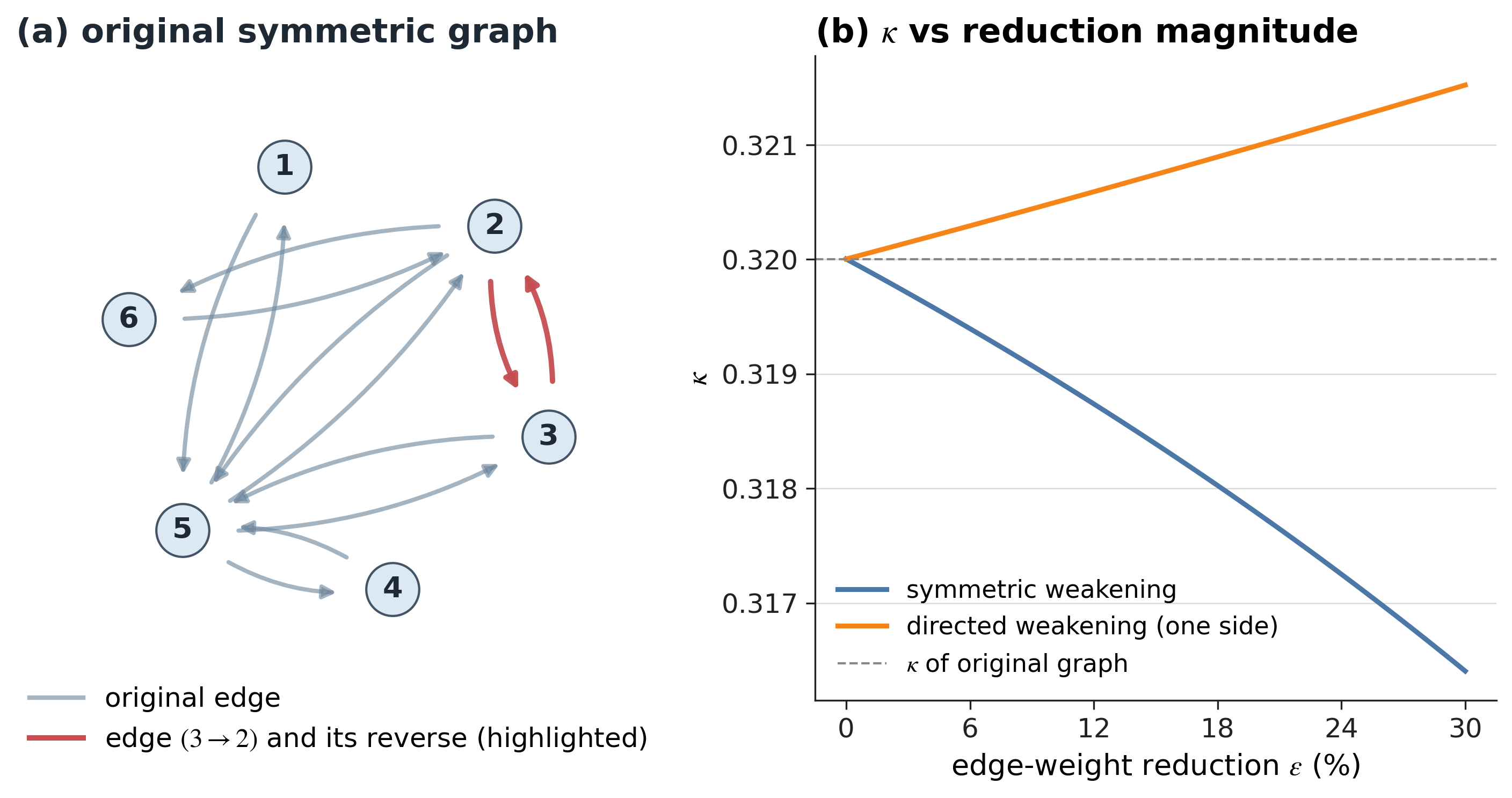}
    \caption{
Symmetric versus asymmetric edge-weight reduction on an originally undirected graph. 
(a) A symmetric weighted graph with the edge pair $(3 \leftrightarrow 2)$ highlighted. 
(b) Evolution of the generalized algebraic connectivity $\kappa$ versus the reduction magnitude $\epsilon$. 
Symmetric reduction (blue) decreases $\kappa$ monotonically, whereas one-sided reduction (orange) increases $\kappa$. 
The dashed line indicates the value of $\kappa$ for the original graph.
}
    \label{fig:remark7}
\end{figure}

\begin{remark}
The above decomposition recovers the classical monotonicity law for undirected graphs under \emph{symmetric} edge-weight perturbations. 
Indeed, in the undirected case one has $\Xi=\frac{1}{n}I$, so that $C=0$ and the stationary redistribution term vanishes. 
If a symmetric perturbation is applied to an undirected edge set $F$, then~\eqref{eq:de1} reduces to
\begin{align*}
\partial_F\kappa
=
\sum_{\{i,j\}\in F}\frac{1}{n}(y_i-y_j)^2
\ge 0,
\end{align*}
implying that strengthening (weakening) edges can only increase (decrease) the algebraic connectivity to first order.

Fig.~\ref{fig:remark7} shows that this monotonicity relies on symmetry. 
Once symmetry is broken, even by perturbing only one direction of an edge, the graph becomes genuinely directed, and the monotonicity may no longer hold. 
In particular, weakening an edge can then increase $\kappa$, indicating that such improvements are inherently tied to directional asymmetry.
\end{remark}

\subsection{Discussion of the Algebraic Connectivity $\gamma$}

Besides $\kappa$, another spectral quantity commonly used in directed consensus is
\[
\gamma \coloneqq \min_{\lambda\in\sigma(L)\setminus\{0\}} \Re(\lambda),
\]
which characterizes the slowest exponential decay rate of the linear consensus dynamics. 
The perturbation framework developed in this paper on $\kappa$ can also be carried out for $\gamma$, but this requires a stronger spectral assumption: the eigenvalue branch attaining the minimum real part must be locally unique, so that no branch switching occurs in a neighborhood of the reference network.

More precisely, assume that there exist $\varepsilon_0>0$ and a simple eigenvalue branch $\lambda_*(\epsilon)$ of $L(\epsilon)$ such that, for all $|\epsilon|<\varepsilon_0$, 
$\gamma(\epsilon)=\Re\bigl(\lambda_*(\epsilon)\bigr)$.
Let $x$ and $p$ denote the corresponding right and left eigenvectors, normalized by
$Lx=\lambda_*x$, $p^*L=\lambda_*p^*$, $p^*x=1$.
Then the first-order variation of $\gamma$ under the perturbation supported on $F$ is given by
\begin{align}\label{eq:degamma}
\partial_F\gamma
=
\sum_{j\to i\in F}
\Re\!\bigl(\overline{p_i}(x_i-x_j)\bigr).
\end{align}
Under the above local spectral-separation hypothesis, $\partial_F\gamma$ is the derivative of the actual quantity $\gamma$, rather than merely the derivative of the real part of a selected simple eigenvalue branch.
Accordingly, the single-edge $\gamma$-sensitivity is $\partial_e\gamma
\coloneqq
\Re\!\bigl(\overline{p_i}(x_i-x_j)\bigr)$, $e=(j\to i)$,
and the total sensitivity is additive: $\partial_F\gamma=\sum_{e\in F}\partial_e\gamma$.
In contrast to the $\kappa$-sensitivity, the $\gamma$-sensitivity depends only on the left and right eigenvectors associated with the relevant eigenvalue branch of $L$ and does not involve a stationary redistribution term.

\section{Algorithms for Edge Modification}\label{sec:Algorithm}

In this section, we translate the perturbation analysis into practical algorithms for network modification. 
The design is guided by the $\kappa$-sensitivity, namely the first-order variation $\partial_F\kappa$, which provides a principled way to evaluate the effect of edge-level perturbations on synchronization-related spectral performance.

\subsection{Computability of $\kappa$-Sensitivity}

We first show that the sensitivity $\partial_F\kappa$ admits an explicit and efficiently computable form. 
We begin with the single-edge case and then extend the result to general edge sets.

\subsubsection{Single-edge Perturbation}

Let $e=(j\to i)$ be a directed edge. 
Under the perturbation convention~\eqref{eq:Ae}, a positive perturbation parameter $\epsilon$ increases the weight of $e$, while a negative perturbation weakens it. 
By Theorem~\ref{thm:kappa}, the corresponding first-order sensitivity is
\begin{align}\label{eq:de_alg}
\partial_e \kappa
=
\xi_i\,y_i\,(y_i-y_j)
+
y^\top (\partial_e \Xi)(L-\kappa I)y.
\end{align}

The first term, referred to as the \emph{directed cut energy}, depends only on local edge information and can be evaluated directly once $(\xi,y)$ are known.

The second term, called the \emph{stationary redistribution effect}, requires the first-order variation of the stationary distribution $\xi$. 
Differentiating the stationary equation $\xi^\top L=0$ under the perturbation of the single edge $e=(j\to i)$ yields
\begin{align}\label{eq:ls}
L^\top \partial_e \xi = -\xi_i (e_i-e_j),
\qquad
\mathbf{1}^\top \partial_e \xi = 0,
\end{align}
where $e_i$ denotes the $i$th canonical basis vector. 
The second constraint enforces the normalization $\mathbf{1}^\top \xi=1$ at first order. 
Once $\partial_e\xi$ is obtained, one sets $\partial_e\Xi=\mathrm{diag}(\partial_e\xi)$,
and the redistribution term is computed as $y^\top (\partial_e \Xi)(L-\kappa I)y$.

Therefore, the full edge sensitivity $\partial_e\kappa$ can be evaluated explicitly for any candidate edge.

\subsubsection{Multiple-edge Perturbation}

For a perturbation supported on a directed edge set $F$, the total sensitivity is additive:
\begin{align}\label{eq:additivity_kappa}
\partial_F \kappa = \sum_{e\in F}\partial_e \kappa.
\end{align}
This follows from the linearity of the perturbation model~\eqref{eq:Ae}. 
Indeed, both the Laplacian variation and the induced stationary variation decompose edgewise (which can be read from the proof of Theorem~\ref{thm:kappa} in the Appendix):
\begin{align*}
L' = \sum_{e\in F} L'_e,
\qquad
\partial_F \xi = \sum_{e\in F} \partial_e \xi,
\qquad
\partial_F \Xi = \sum_{e\in F} \partial_e \Xi.
\end{align*}
Consequently, the total first-order sensitivity is the sum of individual edge contributions, which enables efficient evaluation, ranking, and selection of candidate edge modifications.

\subsection{Numerical Verification of the Sensitivity Formulas}
\begin{figure}[htpb]
    \centering
    \includegraphics[width=\linewidth]{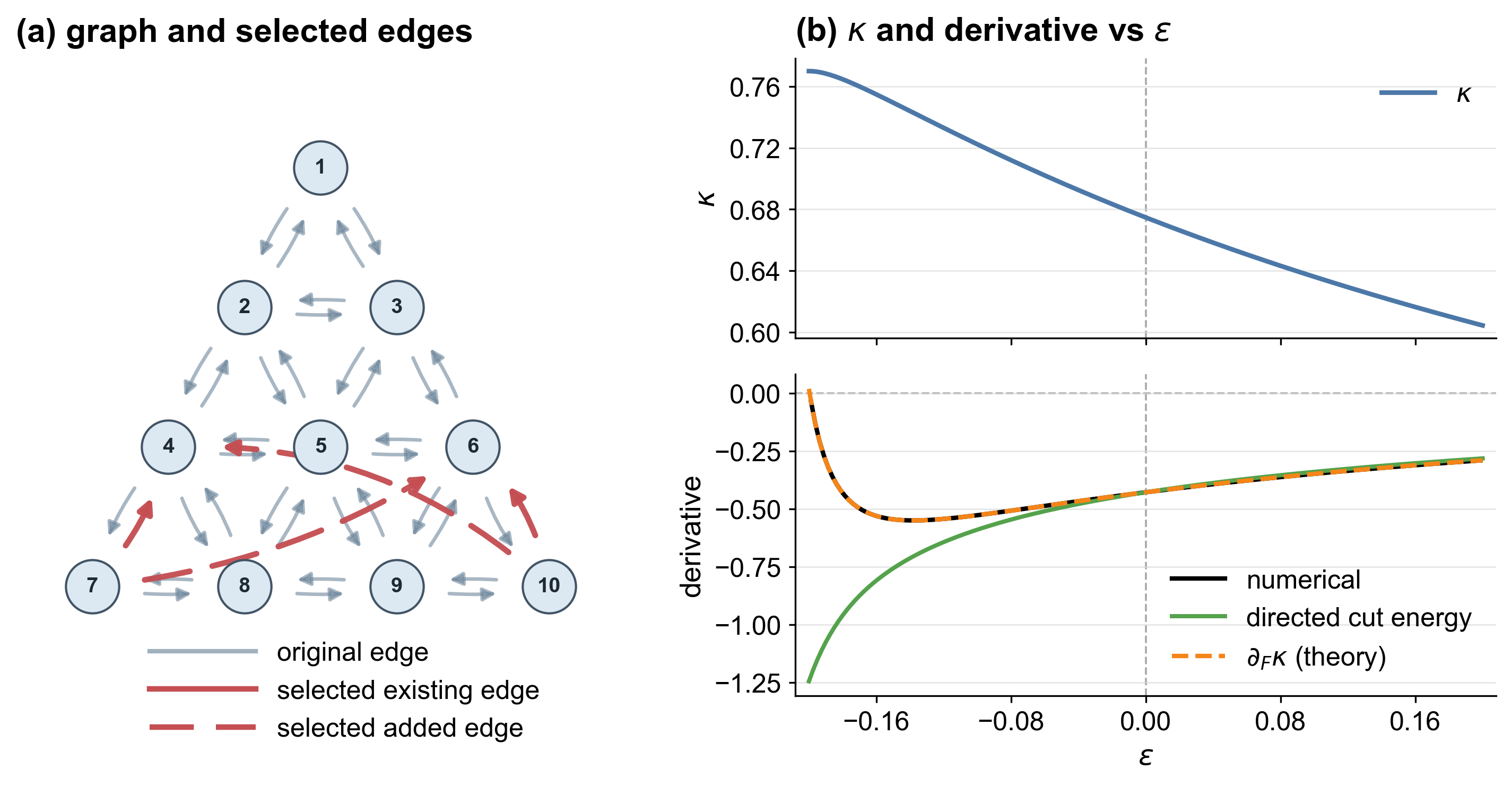}
    \caption{
Numerical verification of the sensitivity formula for $\kappa$. 
(a) Directed graph and selected edges. Blue edges denote the original graph, with unit weights except for inter-layer backward edges, which have weight $0.3$. 
Red solid edges are selected existing edges, and red dashed edges are selected added edges. 
(b) Evolution of $\kappa$ (top) and its derivative with respect to $\epsilon$ (bottom) under perturbations on the selected edges. 
The numerical derivative (black solid line) agrees closely with the theoretical prediction $\partial_F \kappa$ (orange dashed line). 
The directed cut-energy term (green curve) deviates from the full derivative, and the gap corresponds to the stationary redistribution effect.
}
    \label{fig:derivative}
\end{figure}
We validate the correctness of the sensitivity formulas on a directed weighted network with $10$ nodes, as shown in Fig.~\ref{fig:derivative}(a). 
The blue edges represent the original graph. 
All edges have unit weight except for the inter-layer backward edges (from a lower layer to an upper layer), whose weights are set to $0.3$. 
Specifically, the backward edges are
\begin{align*}
& 2\to1,\;3\to1,\\
& 4\to2,\;5\to2,\;5\to3,\;6\to3,\\
& 7\to4,\;8\to4,\;8\to5,\;9\to5,\;9\to6,\;10\to6.
\end{align*}

We select a set of edges $F$ highlighted in red in Fig.~\ref{fig:derivative}(a), where solid red edges correspond to existing edges and dashed red edges correspond to newly added edges. 
Following the perturbation convention~\eqref{eq:Ae}, we apply a uniform perturbation $\epsilon$ to all edges in $F$.
Fig.~\ref{fig:derivative}(b) shows that, within a neighborhood of $\epsilon=0$, increasing the weights of the selected edges (i.e., $\epsilon>0$) leads to a decrease in $\kappa$, whereas decreasing their weights (i.e., $\epsilon<0$) results in an increase. 
This phenomenon highlights the nontrivial role of edge weights in directed networks and provides a concrete example where weakening certain edges enhances connectivity.
Fig.~\ref{fig:derivative}(b) (bottom) compares the numerical derivatives (black solid line) with the theoretical prediction (orange dashed line) for $\partial_F \kappa$. 
The two curves coincide almost perfectly, confirming the correctness of the derived sensitivity formula. 
The contribution of the directed cut-energy term (green curve) is also shown. 
A clear discrepancy between this term and the full derivative $\partial_F \kappa$ is observed, which corresponds to the stationary redistribution effect. 
This demonstrates that the sensitivity of $\kappa$ is influenced by global variations in the stationary distribution $\xi$ and cannot be captured solely by local edge information.

\subsection{Sensitivity-Guided Greedy Strategy}

\begin{algorithm}[h]
\caption{Iterative sensitivity-guided edge weakening with composite step size}
\label{alg:iterative_reduce}
\begin{algorithmic}[1]
\Require Directed weighted graph $A$; step size $s>0$; 
         derivative fraction $d_f\geq 0$; batch size $k\geq 1$;
         selection mode \texttt{choose\_mode} $\in\{$\texttt{topk, randomk, all}$\}$;
         weight threshold $\tau_w>0$; numerical tolerance $\varepsilon>0$;
         maximum number of iterations $T_{\max}$
\Ensure Modified graph $B$ with increased spectral metric $\kappa$

\State $B \gets A$
\For{$t = 1, \dots, T_{\max}$}
    \State Compute edge sensitivities $\partial_e\kappa$ for all admissible edges of $B$
    \State Define the candidate weakening set
    \begin{align*}
        \mathcal{P}(B) \coloneqq \{e=(j\to i) : \partial_e\kappa < 0,\; w_{ij} > \tau_w\}
    \end{align*}
    \If{$\mathcal{P}(B) = \varnothing$}
        \State \textbf{break}
    \EndIf
    \State Select  $\mathcal{S} \subseteq \mathcal{P}(B)$ 
           according to \texttt{choose\_mode}
           \Comment{topk: largest $|\partial_e\kappa|$; randomk: uniformly at random; all: $\mathcal{S}= \mathcal{P}(B) $}
    \State $\partial_{\max} \gets \max_{e\in\mathcal{S}}|\partial_e\kappa|$
    \State $\sigma \gets s$, \quad accepted $\gets$ false
    \While{$\sigma > \varepsilon$ \textbf{ and } accepted $=$ false}
        \State Construct $\widetilde{B} \gets B$
        \For{each $e=(j\to i)\in\mathcal{S}$ with $w_{ij}>0$}
            \State Compute composite step
            \begin{align*}
                \delta_{ij} \gets \min\!\left(\sigma\!\left(
                w_{ij}
                + d_f\frac{|\partial_e\kappa|}{\partial_{\max}}\right),\;w_{ij}\right)
            \end{align*}
            \State $\widetilde{B}_{ij} \gets \max(0,\, w_{ij} - \delta_{ij})$
        \EndFor
        \If{$\kappa(\widetilde{B}) > \kappa(B)$ \textbf{ and } $\widetilde{B}$ satisfies
            Assumptions~\ref{ass:A1}--\ref{ass:A3}}
            \State $B \gets \widetilde{B}$
            \State accepted $\gets$ true
        \Else
            \State $\sigma \gets \tfrac{1}{2}\,\sigma$
            \Comment{fixed bisection backtracking}
        \EndIf
    \EndWhile
    \If{accepted $=$ false}
        \State \textbf{break}
    \EndIf
\EndFor
\State \Return $B$
\end{algorithmic}
\end{algorithm}

The perturbation formulas above naturally lead to a sensitivity-guided greedy strategy for network modification. 
Since the first-order derivative quantifies the local effect of a small edge-weight perturbation, it can be used to rank candidate edges according to their potential contribution to increasing $\kappa$ and improving synchronization-related spectral performance.

If the goal is to \emph{weaken} edges in order to improve $\kappa$, then one should preferentially select edges with negative sensitivity scores, because a negative perturbation $\epsilon<0$ applied to such edges yields a positive first-order change in $\kappa$. 
Similarly, if the goal is to \emph{strengthen} edges, then one should select edges with positive sensitivity scores. 
This provides a unified spectral criterion for deciding whether an edge should be weakened, strengthened, removed, or inserted.

Algorithm~\ref{alg:iterative_reduce} presents a sensitivity-guided greedy algorithm tailored to the edge-weakening task. 
At each iteration, the algorithm computes all single-edge sensitivities and selects candidate edges whose weakening is predicted to increase $\kappa$.

For edge weakening, the admissible set consists of edges with negative sensitivity, since decreasing the weight of such edges leads to a positive first-order variation of $\kappa$. 
Specifically, for $e=(j\to i)$, if $\partial_e\kappa<0$, then a negative perturbation on $e$ increases $\kappa$. 
This motivates the candidate set
\[
\mathcal{P}(B)
\coloneqq
\{\,e=(j\to i):\partial_e\kappa<0,\; w_{ij}>\tau_w\,\},
\]
where $\tau_w>0$ avoids numerically negligible updates.

From $\mathcal{P}(B)$, a subset $\mathcal{S}$ is selected according to a prescribed mode (\texttt{all}, \texttt{topk}, or \texttt{randomk}). 
A trial graph $\widetilde{B}$ is then constructed via a backtracking line search.

A key feature of Algorithm~\ref{alg:iterative_reduce} is that the update magnitude is edge-adaptive rather than uniform. 
For each $e=(j\to i)\in\mathcal{S}$, the decrement is defined as
\[
\delta_{ij}
=
\min\!\left(
\sigma\left(
w_{ij}
+
d_f\frac{|\partial_e\kappa|}{\partial_{\max}}
\right),\;
w_{ij}
\right),~
\partial_{\max}=\max_{e\in\mathcal{S}}|\partial_e\kappa|,
\]
where $\sigma>0$ is a global scaling factor updated during backtracking. 

This composite rule couples two sources of information: 
(i) the current edge weight $w_{ij}$, which controls the admissible reduction scale, and 
(ii) the relative sensitivity magnitude $|\partial_e\kappa|/\partial_{\max}$, which prioritizes edges with stronger spectral impact. 
As a result, edges that are both heavy and highly influential receive larger updates.

The backtracking procedure adjusts the global factor $\sigma$ to ensure monotonic improvement of $\kappa$. 
If the trial graph $\widetilde{B}$ does not increase $\kappa$ or violates Assumptions~\ref{ass:A1}--\ref{ass:A3}, the algorithm contracts the step via
$\sigma \leftarrow \tfrac12 \sigma$,
and repeats the update. 
Therefore, while the backtracking mechanism is global, the effective step applied to each edge is jointly guided by the edge weight and the sensitivity. 

For comparison, we also consider a fixed-step greedy strategy. 
In that variant, once a set of candidate edges is selected, each chosen edge is updated with the same trial decrement $\sigma$, and backtracking is carried out only through the contraction $\sigma \leftarrow \tfrac12 \sigma$. 
In contrast to Algorithm~\ref{alg:iterative_reduce}, this fixed-step strategy does not distinguish between edges of different weights or different sensitivity magnitudes.

We test the two strategies on the original graph shown in Fig.~\ref{fig:derivative}(a). 
The experimental settings are as follows:
with step size $s=0.2$, derivative fraction $d_f=0.1$, batch size $k=10$, selection mode \texttt{choose\_mode} $\in\{$\texttt{topk, randomk, all}$\}$, weight threshold $\tau_w=10^{-4}$, numerical tolerance $\varepsilon=10^{-12}$, maximum number of iterations $T_{\max}=40$.

Fig.~\ref{fig:greedy} compares the evolution of $\kappa$ under different strategies. 
The three curves labeled ``Guided'' correspond to Algorithm~\ref{alg:iterative_reduce}, where edge updates are determined by a composite rule combining weight and sensitivity information, while the three ``Fixed'' curves use a uniform trial decrement.
The results suggest that the guided strategies tend to outperform the fixed-step variants. 
In particular, the \texttt{guided-all} and \texttt{guided-random} modes appear to perform better than the other strategies considered. 
Overall, incorporating sensitivity information leads to more effective improvement of $\kappa$.

\begin{figure}[htbp]
    \centering
    \includegraphics[width=\linewidth]{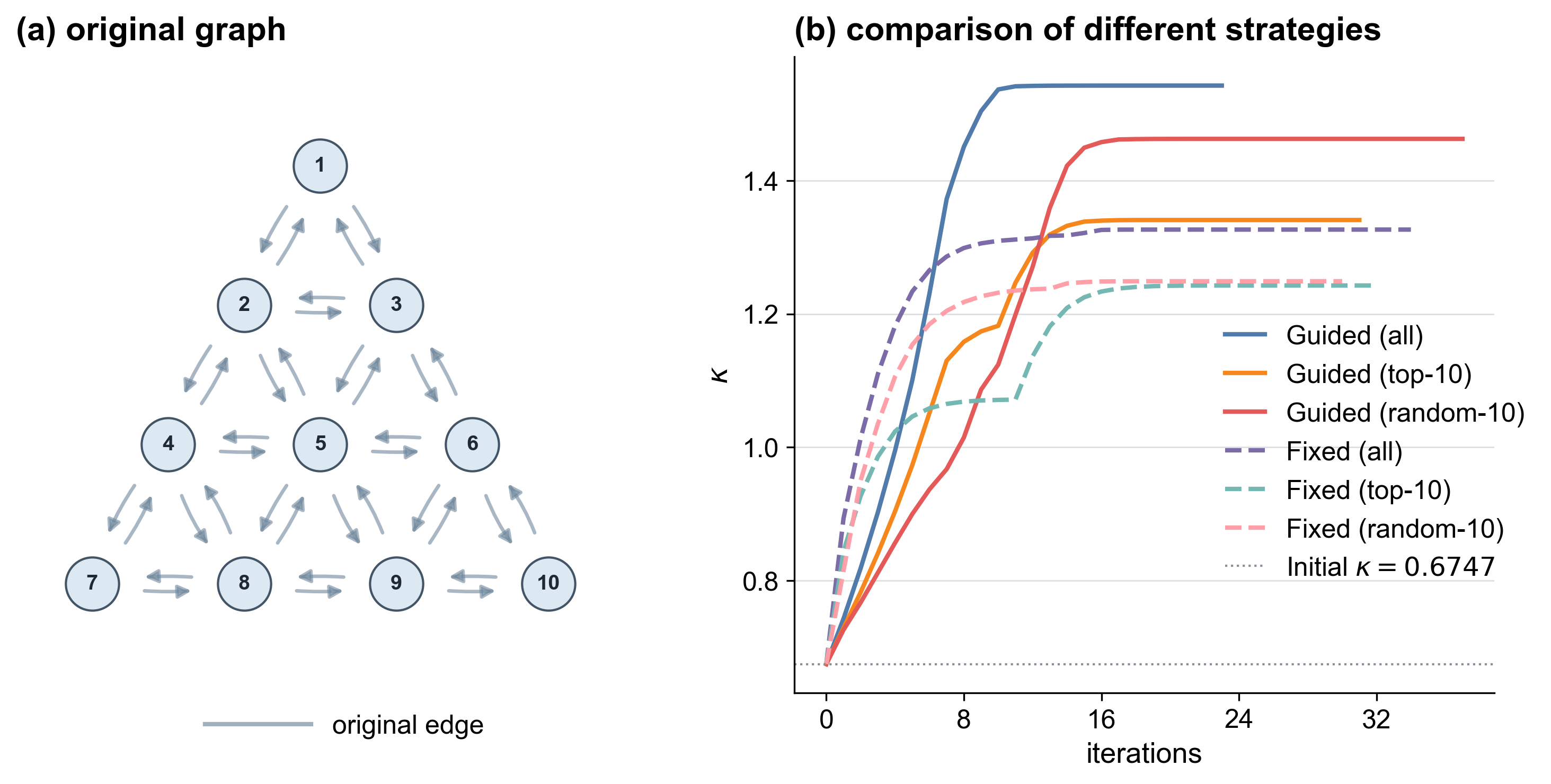}
    \caption{
    (a) Original directed weighted graph.
    (b) Comparison of different edge-weakening strategies.
The horizontal dotted line indicates the initial value of $\kappa$. 
The three ``Guided'' curves correspond to Algorithm~\ref{alg:iterative_reduce}, where the decrement of each selected edge is determined by a composite rule involving both the current edge weight and the normalized sensitivity magnitude, with backtracking on a common scaling factor. 
The three ``Fixed'' curves correspond to a baseline strategy in which all selected edges share the same trial decrement.
The guided strategies lead to larger improvements in $\kappa$ than the fixed-step variants.
    }
    \label{fig:greedy}
\end{figure}

The same framework can be adapted to edge strengthening by replacing the candidate set with $\mathcal{P}(B)\coloneqq \{e:\partial_e\kappa>0\}$, and can also be extended to mixed reweighting strategies that allow both positive and negative perturbations (see Section~\ref{sec:Discussion} for more details).

\begin{remark}[Computational complexity]
In the dense setting, computing the global spectral quantities $(\xi,\kappa,y)$ requires $O(n^3)$ operations per iteration. 
The main additional cost comes from evaluating $\partial_e\xi$ for candidate edges through the constrained linear system~\eqref{eq:ls}. 
If the corresponding bordered system is solved independently for each candidate edge, the cost is $O(n^3)$ per candidate. 
By factorizing the bordered coefficient matrix once per iteration and reusing it for all right-hand sides, each edge-wise solve costs $O(n^2)$. 
Thus, for $q$ candidate directed edges, the per-iteration cost is reduced from $O(qn^3)$ to $O(n^3+qn^2)$; in the worst case $q=O(n^2)$, this gives $O(n^4)$ per iteration and $O(T_{\max}n^4)$ over $T_{\max}$ iterations. 
Once $\partial_e\xi$ is obtained, the directed cut-energy term can be computed in $O(1)$, and the redistribution term can be computed in $O(n)$ per edge after precomputing $Cy$. 
Therefore, the evaluation of these remaining terms is lower order compared with the $O(n^2)$ edge-wise linear solve.
\end{remark}

\begin{remark}[$\gamma$-based variant]
The same sensitivity-guided strategy can be developed for $\gamma$ by replacing $\partial_e\kappa$ with $\partial_e\gamma$ throughout the algorithm. 
Since the $\gamma$-sensitivity does not involve stationary redistribution, the resulting implementation is simpler, although it lacks the additional structural information captured by the $\kappa$-based framework.
\end{remark}
\begin{algorithm}[t]
\caption{Sensitivity-guided discrete edge modification}
\label{alg:discrete_modify}
\begin{algorithmic}[1]
\Require Directed weighted graph $A$; operation type \texttt{op}
$\in\{\texttt{delete},\texttt{addneg}\}$; 
selection mode \texttt{choose\_mode} 
$\in\{\texttt{sortrandomk},\texttt{randomk},\texttt{all}\}$; 
batch size $k$; threshold $\tau_w$; tolerance $\varepsilon$; 
maximum iterations $T_{\max}$; negative weight $\omega_-<0$
\Ensure Modified graph $B$ with increased $\kappa$
\State $B\gets A$
\For{$t=1,\ldots,T_{\max}$}
    \State Compute $\partial_e\kappa$ for admissible candidate edges of $B$
    \If{\texttt{op} $=$ \texttt{delete}}
        \State $\mathcal P(B)\gets
        \{e=(j\to i): B_{ij}>\tau_w,\ \partial_e\kappa<-\varepsilon\}$
    \Else
        \State $\mathcal P(B)\gets
        \{e=(j\to i): |B_{ij}|\le \tau_w,\ \partial_e\kappa<-\varepsilon\}$
    \EndIf
    \If{$\mathcal P(B)=\varnothing$}
        \State \textbf{break}
    \EndIf
    \State Select $\mathcal S\subseteq\mathcal P(B)$ according to \texttt{choose\_mode}
    \State Randomly permute the edges in $\mathcal S$; accepted $\gets$ false
    \For{each $e=(j\to i)\in\mathcal S$ in the permuted order}
        \State $\widetilde B\gets B$
        \If{\texttt{op} $=$ \texttt{delete}}
            \State $\widetilde B_{ij}\gets 0$
        \Else
            \State $\widetilde B_{ij}\gets \omega_-$
        \EndIf
        \If{$\kappa(\widetilde B)>\kappa(B)+\varepsilon$ and 
        $\widetilde B$ satisfies Assumptions~\ref{ass:A1}--\ref{ass:A3}}
            \State $B\gets \widetilde B$; accepted $\gets$ true
            \State \textbf{break}
        \EndIf
    \EndFor
    \If{accepted $=$ false}
        \State \textbf{break}
    \EndIf
\EndFor
\State \Return $B$
\end{algorithmic}
\end{algorithm}
\subsection{Heuristic Variants: Edge Deletion and Negative-Edge Insertion}

The same sensitivity principle can be used to guide discrete edge modifications beyond continuous weight weakening. 
We consider two representative operations: deleting an existing edge and inserting a new directed edge with a prescribed negative weight $\omega_-<0$. 
For deletion, the admissible set is
\[
\mathcal P_{\rm del}(B)=
\{e=(j\to i): B_{ij}>\tau_w,\ \partial_e\kappa<-\varepsilon\},
\]
whereas for negative-edge insertion it is
\[
\mathcal P_{\rm add}(B)=
\{e=(j\to i): |B_{ij}|\le\tau_w,\ \partial_e\kappa<-\varepsilon\}.
\]

Given the admissible set $\mathcal P(B)$, a subset $\mathcal S$ is selected according to \texttt{choose\_mode}: 
\texttt{sortrandomk} selects the top-$k$ edges with largest $|\partial_e\kappa|$ and then tests them in random order; 
\texttt{randomk} selects $k$ edges uniformly at random; 
and \texttt{all} sets $\mathcal S=\mathcal P(B)$. 
Each candidate in $\mathcal S$ is then tested by applying the corresponding modification, namely setting $B_{ij}$ to $0$ for deletion or to $\omega_-$ for negative-edge insertion. 
The first candidate that yields $\kappa(\widetilde B)>\kappa(B)+\varepsilon$ while preserving Assumptions~\ref{ass:A1}--\ref{ass:A3} is accepted; otherwise the iteration terminates.

Because deletion and negative-edge insertion are not infinitesimal perturbations, $\partial_e\kappa$ is used only as a local ranking heuristic rather than as a guaranteed prediction of the resulting change in $\kappa$. 
Algorithm~\ref{alg:discrete_modify} summarizes the unified procedure.

\section{Numerical Results}\label{sec:Results}

In this section, we validate the proposed $\kappa$-sensitivity framework through numerical experiments on real directed networks. 
We consider three types of sensitivity-guided edge modifications: edge-weight weakening, edge deletion, and negative-edge insertion. 
The experiments are conducted on both first-order and second-order nonlinear consensus dynamics in order to assess the robustness and practical usefulness of the proposed perturbation-based design.

\subsection{Experimental Setup}

Our experiments are based on the Email-Eu-core network~\cite{leskovec2007graph,yin2017local}, a real-world directed network containing $1005$ nodes and $25571$ directed edges. 
The raw edge list is first relabeled so that node identifiers form consecutive indices, and all self-loops are removed. 
We then construct a directed unweighted graph by assigning unit weight to all retained edges. 
For each target size, we sample a subset of high-degree nodes and extract the corresponding induced subgraph. 
To ensure consistency with our theoretical setting, we retain its largest strongly connected component (SCC), 
so that Assumptions~\ref{ass:A1} and~\ref{ass:A2} hold under the nonnegative-weight setting. 
We further verify numerically that Assumption~\ref{ass:A3} is satisfied. 
If necessary, we perform a mild additional cleanup by iteratively removing problematic nodes and retaining the largest strongly connected component, until all assumptions are met.

Using this procedure, we obtain two representative subgraphs with $100$ and $199$ nodes, containing $3823$ and $8582$ directed edges, respectively. 
These subgraphs are represented by adjacency matrices $A$, where $A_{ij}=1$ indicates a directed edge from node $j$ to node $i$. 
The smaller subgraph ($100$ nodes) is used for second-order nonlinear dynamics, while the larger one ($199$ nodes) is used for first-order nonlinear dynamics. 
Both choices balance computational tractability and structural complexity, enabling clear observation of the transition from non-synchronization to synchronization under sensitivity-guided modifications.

We consider three types of edge-level modifications:
(i) weakening  existing edges (Algorithm~\ref{alg:iterative_reduce}, \texttt{topk} mode);
(ii) removing existing edges (Algorithm~\ref{alg:discrete_modify}, \texttt{delete} operation and \texttt{sortrandomk} mode);
(iii)inserting negative edges  (Algorithm~\ref{alg:discrete_modify}, \texttt{addneg} operation, \texttt{sortrandomk} mode, and negative weight $\omega_-=-1.0$).

We evaluate synchronization behavior for both first-order and second-order nonlinear consensus systems. 
Initial conditions are drawn from zero-mean Gaussian distributions with small-amplitude scaling. 
Specifically, for first-order systems, we set $x_i(0)=0.02\,\eta_i$,
and for second-order systems, $x_i(0)=0.02\,\eta_i$, $v_i(0)=0.02\,\zeta_i$, where $\eta_i,\zeta_i \sim \mathcal{N}(0,1)$.
In all experiments, a single realization of the initial condition, generated using a fixed random seed, is used consistently across all modification steps to ensure a controlled and fair comparison.

All dynamical systems are simulated using an adaptive Runge--Kutta method (RK45) implemented via \texttt{solve\_ivp}, with tolerances $\mathrm{rtol}=10^{-8}$ and $\mathrm{atol}=10^{-10}$. 
Time trajectories are evaluated over a sufficiently long horizon, with a nominal sampling step $\Delta t=0.05$. 
The total simulation time is set to $300$ for first-order systems and $500$ for second-order systems.

We use the mean-square deviation from the network average to quantify synchronization performance. 
For first-order systems,
\begin{align*}
e(t)=\frac{1}{n}\sum_{i=1}^n\bigl(x_i(t)-\bar{x}(t)\bigr)^2,
\qquad
\bar{x}(t)=\frac{1}{n}\sum_{i=1}^n x_i(t),
\end{align*}
while for second-order systems,
\begin{align*}
e(t)=
\frac{1}{n}\sum_{i=1}^n\bigl(x_i(t)-\bar{x}(t)\bigr)^2
+
\frac{1}{n}\sum_{i=1}^n\bigl(v_i(t)-\bar{v}(t)\bigr)^2.
\end{align*}

We further consider a time-averaged error
\begin{align*}
E(T)=\frac{1}{T_w}\int_T^{T+T_w} e(t)\,dt,
\end{align*}
to capture long-time behavior,
where $T_w$ denotes the observation window. 
In our experiments, we set $(T,T_w)=(240,60)$ for first-order systems and $(T,T_w)=(400,100)$ for second-order systems, so that the initial transient phase is excluded. 
The quantity $\log E$ is reported in the figures for visualization.

\begin{figure*}[t]
\centering
\includegraphics[width=\textwidth]{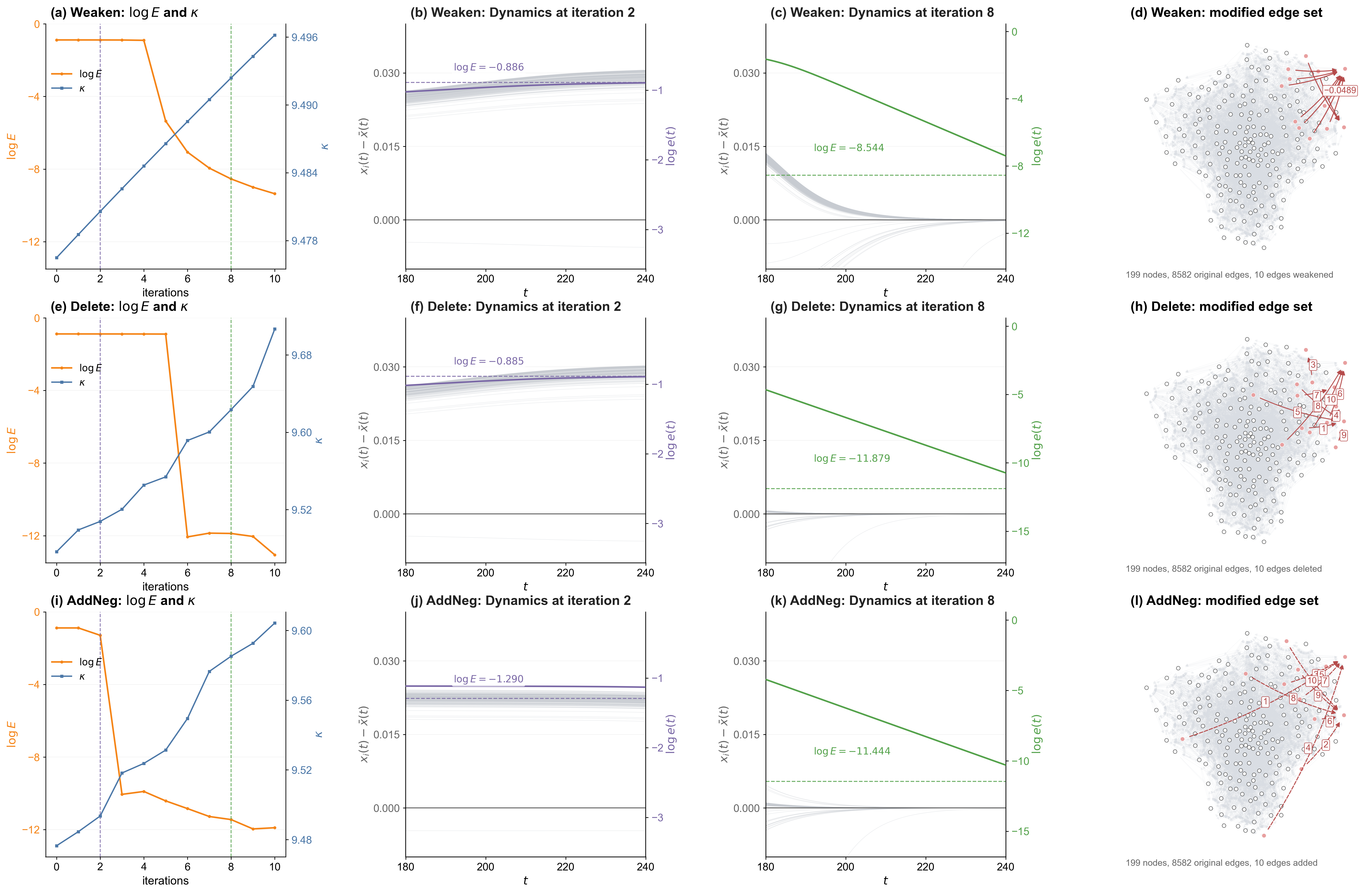}
\caption{Sensitivity-guided modification for first-order nonlinear consensus dynamics on the Email-Eu-core subgraph ($n=199$).
The system is governed by $\dot{x}=0.1\sin(x)-cLx$ with $c=1.65\times10^{-3}$. 
Each row corresponds to one modification strategy: 
(a)--(d) edge-weight weakening, 
(e)--(h) edge deletion, and 
(i)--(l) negative-edge insertion.
In each row, 
the first column shows the evolution of $\log E$ (left axis) and $\kappa$ (right axis) versus iteration step. 
The second and third columns display the state trajectories $x_i(t)-\bar{x}(t)$ at an early step (iteration 2) and a later step (iteration 8), respectively, together with the corresponding $\log e(t)$. 
The fourth column visualizes the modified edges on the network, where highlighted edges indicate weakened, removed, or newly inserted negative connections.
Across all strategies, $\kappa$ increases monotonically with the modification steps, while the synchronization error decreases by several orders of magnitude. 
In the reported example, the dynamical trajectories exhibit a transition from a nonsynchronized regime to a synchronized state.}
\label{fig:first_nonlinear}
\end{figure*}

\begin{figure*}[t]
\centering
\includegraphics[width=\textwidth]{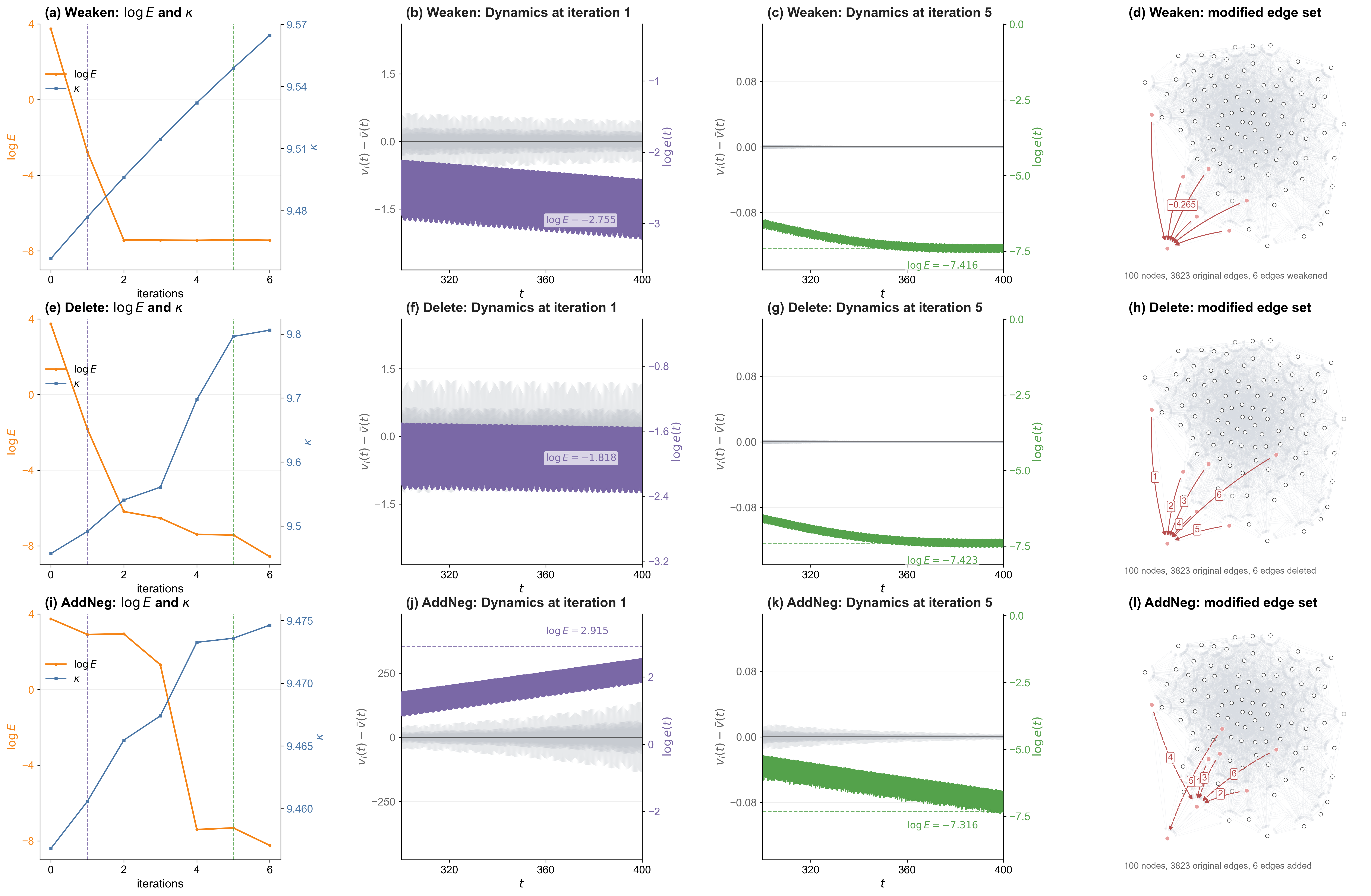}
\caption{Sensitivity-guided modification for second-order nonlinear consensus dynamics on the Email-Eu-core subgraph ($n=100$).
The system is governed by
$\dot{x}=v$,
$\dot{v}=0.1\tanh(x)+0.1\tanh(v)-\alpha Lx-\beta Lv$
with $(\alpha,\beta)=(40,0.0158)$. 
Each row corresponds to one modification strategy: 
(a)--(d) edge-weight weakening, 
(e)--(h) edge deletion, and 
(i)--(l) negative-edge insertion.
In each row, 
the first column shows the evolution of $\log E$ (left axis) and $\kappa$ (right axis) versus iteration step. 
The second and third columns display the early-step and late-step trajectories of the second-order dynamics (velocity deviations), together with $\log e(t)$. 
The fourth column visualizes the modified edges, where highlighted edges indicate weakened, removed, or newly inserted negative connections.
Across all strategies, $\kappa$ increases monotonically, while the synchronization error decreases significantly. 
In the reported higher-order nonlinear example, the dynamical trajectories show a transition from a nonsynchronized regime to a synchronized state.}
\label{fig:second_nonlinear}
\end{figure*}

\subsection{First-Order Nonlinear Systems}

We first consider first-order nonlinear consensus dynamics of the form
$\dot{x}=f(x)-cLx$,
where $f(x)=0.1\sin(x)$, $c=1.65\times10^{-3}$.

The weight threshold and numerical tolerance are set to $\tau_w=10^{-4}$ and $\varepsilon=10^{-12}$, respectively. 
For the edge-weight weakening algorithm, we use batch size $k=10$, step size  $s=0.005$, and derivative fraction $d_f=0$.
The small value of $s$ ensures that the selected edge set remains stable across iterations. 
Moreover, by setting $d_f = 0$, the update magnitude becomes independent of the sensitivity, resulting in identical weight reductions for all selected edges. 
For edge deletion and negative-edge insertion, we use a larger batch size $k=35$. 
All methods are run for $T_{\max}=10$ iterations.

Fig.~\ref{fig:first_nonlinear} summarizes the effect of sensitivity-guided edge modifications on synchronization. 
Across all three intervention types—edge-weight weakening, edge deletion, and negative-edge insertion—we observe a consistent monotonic increase in $\kappa$, accompanied by a pronounced decrease in the synchronization error $E$. 
In particular, the initial network exhibits a non-synchronized regime with relatively large steady-state error, while in the reported example, the modified networks enter a synchronized regime after only a few steps.

The dynamical trajectories provide a direct illustration of this transition. 
At early stages (e.g., iteration 2), node states remain dispersed and the error $e(t)$ stabilizes at a relatively high level. 
At later stages (e.g., iteration 8), all trajectories collapse toward a common value, and $e(t)$ rapidly decays, indicating synchronization. 

We emphasize that these improvements are achieved through highly sparse modifications: in all cases, only a small number of edges (at most $10$) are altered. 
Despite their localized nature, these changes induce a global transition in the system behavior, highlighting the nonlocal influence of directed interactions.

Notably, the improvement is achieved through counterintuitive modifications: rather than strengthening the network uniformly, the method selectively weakens or reverses a few interactions, leading to faster convergence. 
This phenomenon reflects the combined effect of the directed cut-energy term and the stationary redistribution effect.

\subsection{Second-Order Nonlinear Systems}

We next consider second-order nonlinear consensus dynamics of the form
$\dot{x}=v$, $\dot{v}=f(x,v)-\alpha Lx-\beta Lv$,
where $f(x,v)=0.1\tanh(x)+0.1\tanh(v)$, $(\alpha, \beta) = (40, 0.0158)$.

The weight threshold and numerical tolerance are set to $\tau_w=10^{-4}$ and $\varepsilon=10^{-12}$, respectively. 
For the edge-weight weakening algorithm, we use batch size $k=6$, step size $s=0.05$, and derivative fraction $d_f=0$. 
For edge deletion and negative-edge insertion, we use batch size $k=4$. 
All methods are run for $T_{\max}=6$ iterations.

The results are shown in Fig.~\ref{fig:second_nonlinear}. 
All three sensitivity-guided strategies remain applicable to second-order systems with coupled position and velocity variables, yielding consistent improvements in $\kappa$ and reductions in the synchronization error.

At early stages (e.g., iteration 1), the trajectories remain widely dispersed, and the error $e(t)$ stabilizes at a relatively high level, indicating a non-synchronized regime. 
As the modification steps proceed, the system undergoes a sharp transition in the reported example: at later stages (e.g., iteration 4 or 5), both position and velocity components converge rapidly, and $e(t)$ decays. 
This behavior suggests that, for the specific second-order system considered here, increasing $\kappa$ effectively enhances synchronization even in the presence of higher-order dynamics and nonlinear coupling.

Despite the increased dynamical complexity, the required modifications remain sparse. 
In all cases, only a small number of edges (at most $6$) are modified, yet the global behavior of the system is fundamentally altered. 
This again highlights the nonlocal influence of directed interactions and demonstrates that carefully selected local perturbations can control macroscopic synchronization properties.

\section{Discussion}\label{sec:Discussion}
The preceding results show that edge weakening, edge deletion, and negative-edge insertion can increase the generalized algebraic connectivity $\kappa$ in directed networks. 
However, there is an upper bound on the largest value of $\kappa$ that can be attained solely through edge-weight reduction (see Proposition~\ref{PROP:kappa-upper} in the Appendix):
\begin{align*}
    \kappa \le \frac{1}{n-1}\sum_{j\to i \in E} a_{ij}.
\end{align*} 
Therefore, if one aims to further enlarge the parameter region suggested by directed second-order consensus theory~\cite{yu2009second}, it is natural to consider the opposite operation: increasing selected edge weights under a fixed budget. 

This leads to a budget-constrained edge-strengthening problem: given a positively weighted directed graph and a total additional weight budget $\mathcal B>0$, how should the budget be allocated in order to increase $\kappa$ most effectively? 
Under the perturbation convention~\eqref{eq:Ae}, positive perturbations correspond to edge-weight increases, and thus edges with $\partial_e\kappa>0$ are favorable candidates for strengthening. 
Algorithm~\ref{alg:budget_strengthening} implements this idea by repeatedly selecting sensitivity-positive edges and accepting only updates that lead to an actual increase in $\kappa$.

\begin{algorithm}[h]
\caption{Budget-constrained sensitivity-guided edge strengthening (OURS-FIXED/GUIDED)}
\label{alg:budget_strengthening}
\begin{algorithmic}[1]
\Require Directed weighted graph $A$; total budget $\mathcal{B}>0$;
         step size $s>0$; 
         batch size $k\geq 1$; selection mode \texttt{choose\_mode} $\in \{\texttt{topk, randomk, all}\}$;
         numerical tolerance $\varepsilon>0$;
         allocation mode \texttt{alloc} $\in \{\texttt{FIXED, GUIDED}\}$
\Ensure Modified graph $B$ with increased $\kappa$ under budget $\mathcal{B}$

\State $B \gets A$, \quad $\mathcal{B}_{\mathrm{left}} \gets \mathcal{B}$
\While{true}
    \If{$\mathcal{B}_{\mathrm{left}} \leq \varepsilon$}
        \State \textbf{break}
    \EndIf

    \State Compute sensitivities $\partial_e\kappa$ for all existing edges of $B$
    \State Define the candidate strengthening set
    \begin{align*}
        \mathcal{P}(B) \coloneqq \{e=(j\to i) : \partial_e\kappa > 0\}
    \end{align*}
    \If{$\mathcal{P}(B) = \varnothing$}
        \State \textbf{break}
    \EndIf
    \State Select $\mathcal{S} \subseteq \mathcal{P}(B)$ according to \texttt{choose\_mode}

    \State $\alpha \gets \min(s,\mathcal{B}_{\mathrm{left}})$, \quad accepted $\gets$ false
    \While{$\alpha > \varepsilon$ \textbf{ and } accepted $=$ false}
        \State Construct $\widetilde{B} \gets B$
        \State Compute edge increments according to \texttt{alloc}:
        \[
        \delta_e \gets
        \begin{cases}
        \dfrac{\alpha}{|\mathcal{S}|},
        & \texttt{FIXED}, \\[8pt]
        \alpha\,\dfrac{\partial_e\kappa}
        {\displaystyle\sum_{e'\in\mathcal{S}}\partial_{e'}\kappa},
        & \texttt{GUIDED}.
        \end{cases}
        \]
        \For{each $e=(j\to i)\in\mathcal{S}$}
            \State $\widetilde{B}_{ij} \gets B_{ij} + \delta_e$
        \EndFor
        \If{$\kappa(\widetilde{B}) > \kappa(B)$}
            \State $\Delta \gets \sum_{e\in\mathcal{S}} \delta_e$
            \Comment{actual added weight}
            \State $B \gets \widetilde{B}$, \quad
                   $\mathcal{B}_{\mathrm{left}} \gets \mathcal{B}_{\mathrm{left}} - \Delta$
            \State accepted $\gets$ true
        \Else
            \State $\alpha \gets \frac{1}{2}\alpha$
        \EndIf
    \EndWhile

    \If{accepted $=$ false}
        \State \textbf{break}
    \EndIf
\EndWhile
\State \Return $B$
\end{algorithmic}
\end{algorithm}

We compare two sensitivity-based allocation rules. 
OURS-FIXED selects the top-$k$ sensitivity-positive edges and distributes the trial increment uniformly among them, whereas OURS-GUIDED allocates the same trial increment proportionally to the sensitivity magnitudes. 
Both methods recompute sensitivities after each accepted update and use backtracking to ensure monotonic improvement of $\kappa$. 
As baselines, we include directed edge betweenness centrality (Dir-EBC)~\cite{girvan2002community}, directed degree-assortativity score (Dir-DAC)~\cite{newman2002assortative}, Random, and Uniform. 
All methods are evaluated under the same total budget and the same acceptance criterion.

For the edge-based structural baselines, Dir-EBC assigns each directed edge $e$ the score
\[
\mathrm{EBC}(e)
=
\sum_{s\neq t}\frac{\sigma_{st}(e)}{\sigma_{st}},
\]
where $\sigma_{st}$ is the number of shortest directed paths from $s$ to $t$, and $\sigma_{st}(e)$ is the number of such paths passing through $e$. 
For weighted graphs, edge lengths are taken as the inverse of the corresponding weights. 
Dir-DAC assigns each edge $j\to i$ the score
\[
\mathrm{DAC}(j\to i)
=
(d_j^{\mathrm{out}}-\bar d^{\mathrm{out}})
(d_i^{\mathrm{in}}-\bar d^{\mathrm{in}}),
\]
where $d_j^{\mathrm{out}}$ and $d_i^{\mathrm{in}}$ are the weighted out-degree and weighted in-degree, respectively, and $\bar d^{\mathrm{out}}$ and $\bar d^{\mathrm{in}}$ are their graph averages. 
For Random, existing edges are assigned i.i.d.\ random scores and ranked accordingly. 
For Uniform, no ranking is used, and the budget is distributed evenly over all existing edges.

All methods are evaluated under the same total budget $\mathcal{B}=10$. 
At each accepted step, the remaining budget is updated and the corresponding value of $\kappa$ is recorded. 
The candidate pool size is fixed at $k=10$. 
OURS-GUIDED and OURS-FIXED recompute edge sensitivities at each iteration, select the top-$k$ improving edges, and update them simultaneously through backtracking. 
Dir-EBC, Dir-DAC, and Random use a \texttt{sortrandomk} scheme: the top-$k$ candidates according to the corresponding score are selected, randomly permuted, and examined sequentially, and only the first edge yielding a strict increase in $\kappa$ is updated.

Fig.~\ref{fig:budget_compare_kappa} shows the results on directed Erd\H{o}s--R\'enyi and directed small-world networks. 
In both cases, the sensitivity-based methods outperform the structural and random baselines, indicating that first-order $\kappa$-sensitivity provides useful information not only for identifying favorable edges, but also for allocating a strengthening budget. 
Moreover, OURS-GUIDED consistently improves over OURS-FIXED, showing that proportional allocation according to sensitivity magnitudes is more effective than uniform allocation over the same selected edge set. 
These results demonstrate that the proposed framework is not limited to reducing coupling; it can also guide the efficient use of additional coupling resources.

\begin{figure*}[t]
    \centering
    \includegraphics[width=\linewidth]{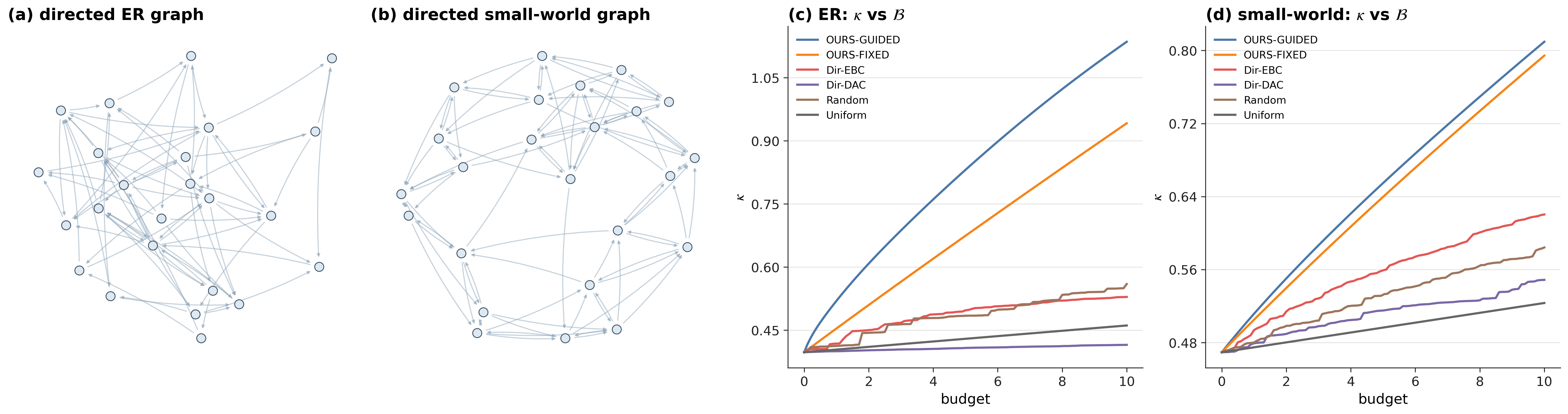}
    \caption{
    Budget-constrained edge strengthening on directed networks.
    (a) A directed Erd\H{o}s--R\'enyi graph with $n=24$ nodes and connection probability $p=0.14$, where existing edge weights are independently sampled from $[0.6,1.4]$.
    (b) A directed small-world graph constructed from a directed ring lattice with nearest-neighbor connections in both directions, augmented with second- and third-neighbor links and random rewiring with probability $0.20$, with edge weights sampled from $[0.6,1.4]$.
    In both cases, the graph is required to be strongly connected.
    (c)--(d) Evolution of the generalized algebraic connectivity $\kappa$ as a function of the consumed budget $\mathcal{B}$.
   At each iteration, OURS-GUIDED and OURS-FIXED recompute sensitivities and update the \texttt{topk} edges via backtracking. 
OURS-FIXED uses uniform allocation, whereas OURS-GUIDED allocates updates proportionally to the sensitivity magnitudes. 
The structural baselines (Dir-EBC, Dir-DAC) and Random follow a \texttt{sortrandomk} rule, in which a subset of candidates is selected, randomly permuted, and examined sequentially, accepting the first edge that yields a strict increase in $\kappa$. 
Uniform distributes the budget evenly over all edges.
The sensitivity-based methods consistently achieve the largest improvement in $\kappa$, and the proportional allocation in OURS-GUIDED further improves performance over the uniform allocation in OURS-FIXED.
    }
    \label{fig:budget_compare_kappa}
\end{figure*}

\section{Conclusion}\label{sec:Conclusion}

In this paper, we developed a perturbation-based $\kappa$-sensitivity framework for synchronization analysis and optimization in directed weighted networks. 
Our main theoretical contribution is an explicit first-order perturbation formula for the generalized algebraic connectivity $\kappa$, together with a decomposition into a directed cut-energy term and a stationary redistribution term. 
This decomposition reveals that, in directed networks, the synchronization role of an edge is determined not only by local edgewise disagreement, but also by the global redistribution of stationary mass induced by perturbations.

Based on this spectral sensitivity analysis, we proposed sensitivity-guided strategies for edge weakening, edge deletion, negative-edge insertion, and budget-constrained weight strengthening. 
The numerical results show that, in the reported experiments, these strategies outperform standard structural baselines on synthetic and real directed networks. 
In particular, the experiments exhibit the counterintuitive phenomenon emphasized in this paper: weakening a small set of carefully selected edges can increase $\kappa$, accelerate convergence, and, in some nonlinear examples, induce a transition from nonsynchronization to synchronization.

The proposed framework therefore provides a unified local spectral explanation of how less coupling can improve synchronization-related spectral performance, and helps explain why, in some directed nonlinear systems, this may also be accompanied by a transition to synchronization. 
At the same time, the same sensitivity principle can also be used in the opposite direction, namely to allocate additional coupling resources more effectively when strengthening interactions is desired. 
This makes the framework broadly applicable to synchronization enhancement, network design, and control of directed networked dynamical systems, while keeping clear that the present theory is first-order and local in nature.

Several directions remain open for future work. 
On the theoretical side, it would be interesting to extend the perturbation analysis beyond the first-order regime and to study more systematically the connection between $\kappa$ and synchronization thresholds in nonlinear higher-order systems. 
On the algorithmic side, one may explore more scalable optimization strategies for large networks and more structured perturbation models, such as node-level interventions or constrained signed modifications. 
These questions may further broaden the applicability of perturbation-based spectral sensitivity methods in complex network design and control.


\appendix[Analyticity and Proofs of the Main Results]

To establish the differentiability of $\kappa$, we use analytic perturbation theory for Hermitian generalized eigenvalue problems~\cite{andrew1998computation}. 
Since the generalized eigenvalue formulation for $\kappa$ involves the stationary distribution $\xi$, we first prove the analyticity of $\xi(\epsilon)$ under the perturbation model~\eqref{eq:Ae}.

\begin{theorem}\label{thm:analytic}
Let $P(\rho)$ and $Q(\rho)$ be matrix-valued functions analytic in $\rho$ in a neighborhood of $\rho_0$, and assume that for each $\rho$ in this neighborhood,
\[
P(\rho)=P(\rho)^*,
\qquad
Q(\rho)=Q(\rho)^*,
\]
with $Q(\rho)$ positive definite. 
Consider the generalized Hermitian eigenvalue problem
\[
P(\rho)x(\rho)=\lambda(\rho)\,Q(\rho)x(\rho).
\]
Then the eigenvalues admit analytic parameterizations in a neighborhood of $\rho_0$. 
Moreover, if $\lambda(\rho_0)$ is simple, then there exist an analytic eigenvalue branch $\lambda(\rho)$ and an associated analytic eigenvector branch $x(\rho)$, unique up to analytic normalization.
\end{theorem}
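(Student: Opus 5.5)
The plan is to reduce the generalized Hermitian problem to a standard Hermitian eigenvalue problem that depends analytically on $\rho$, and then apply Rellich's theorem on analytic perturbation of Hermitian matrix families.

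\emph{Step 1: analytic square root of $Q$.} Since $Q(\rho)$ is positive definite and analytic, I will take
\[
Q(\rho)^{-1/2}=\frac{1}{2\pi i}\oint_\Gamma z^{-1/2}\bigl(zI-Q(\rho)\bigr)^{-1}\,dz .
\]
Here $\Gamma$ is a contour in the right half-plane enclosing the spectrum of $Q(\rho_0)$. By continuity of the spectrum, $\Gamma$ still encloses the spectrum of $Q(\rho)$ for $\rho$ near $\rho_0$. The resolvent is analytic in $\rho$, and $z^{-1/2}$ is analytic on $\Gamma$ via the principal branch. Hence $Q(\rho)^{-1/2}$ is analytic in $\rho$, Hermitian, and positive definite for real $\rho$.

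\emph{Step 2: reduction to a standard problem.} Set
\[
H(\rho)=Q^{-1/2}(\rho)\,P(\rho)\,Q^{-1/2}(\rho).
\]
This family is analytic, and it is Hermitian for real $\rho$. The generalized problem $Px=\lambda Qx$ is equivalent to $Hw=\lambda w$ with $x=Q^{-1/2}w$. So the generalized eigenvalues are exactly the eigenvalues of $H(\rho)$, and eigenvectors correspond through an analytic invertible map.

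\emph{Step 3: Rellich's theorem.} I will invoke Rellich's theorem: an analytic family of Hermitian matrices in a real parameter admits analytic eigenvalue functions and an analytic orthonormal eigenvector basis. This gives the analytic parameterization of all eigenvalues.

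\emph{Step 4: the simple eigenvalue.} When $\lambda(\rho_0)$ is simple, I will argue more directly via the Riesz projector
\[
\Pi(\rho)=\frac{1}{2\pi i}\oint_{\gamma}\bigl(zI-H(\rho)\bigr)^{-1}\,dz ,
\]
where $\gamma$ is a small circle isolating $\lambda(\rho_0)$. Then $\Pi(\rho)$ is analytic with rank one, and $\lambda(\rho)=\operatorname{tr}\bigl(H(\rho)\Pi(\rho)\bigr)$ is analytic. Choosing a fixed $w_0$ with $\Pi(\rho_0)w_0\neq0$ gives the analytic eigenvector $w(\rho)=\Pi(\rho)w_0$, and then $x(\rho)=Q^{-1/2}(\rho)w(\rho)$. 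Any other analytic eigenvector branch is a scalar analytic multiple of this one, because the eigenspace is one-dimensional and the nonvanishing component ratio is analytic. This gives uniqueness up to analytic normalization, for example $x^*Qx=1$ after dividing by $\sqrt{x^*Qx}$, which is analytic since it is positive.

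\emph{Main obstacle.} The delicate point is the analytic eigenvalue parameterization at crossings in the non-simple case. That is exactly the content of Rellich's theorem, which relies on the Hermitian structure for real $\rho$. I will cite it rather than reprove it; for the application to $\kappa$ only the simple case of Step 4 is needed, and that case is elementary.
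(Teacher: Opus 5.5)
Your proposal is correct. It differs in kind from the paper, which gives no proof of Theorem~\ref{thm:analytic}: it imports the result from analytic perturbation theory for Hermitian pencils \cite{andrew1998computation} and works with the pair $(P,Q)$ directly. You instead reduce to the standard Hermitian family $H(\rho)=Q(\rho)^{-1/2}P(\rho)Q(\rho)^{-1/2}$, cite Rellich only for the clause about all eigenvalues, and prove the simple-eigenvalue clause yourself with a Riesz projector.

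Your route buys two things. First, the only part used in the proof of Theorem~\ref{thm:kappa} becomes self-contained. Your Step 4 also shows that this part needs no Hermitian structure at all: applied to $Q(\rho)^{-1}P(\rho)$, the Riesz-projector argument needs only invertibility of $Q$ and algebraic simplicity. Hermitian symmetry with a \emph{real} parameter is needed only for the general clause, to rule out Puiseux-type branching at eigenvalue crossings. Your restriction to real $\rho$ is the right reading of the hypothesis, since $P(\rho)=P(\rho)^*$ on a complex neighborhood would force $P$ to be constant.

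Second, your reduction shows the pencil is unnecessary in the paper's application. There $Q=\Xi(\epsilon)$ is diagonal with positive analytic entries (Lemma~\ref{lem:xi-analytic}), so $\Xi(\epsilon)^{-1/2}$ is analytic entrywise without any contour integral. Your $H$ is then exactly the matrix $M(\epsilon)$ that the paper passes to right after invoking the theorem.

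The cited pencil theory, in turn, avoids forming $Q^{-1/2}$. It also matches how the paper differentiates, computing $\kappa'=y^\top(P'-\kappa Q')y$ directly from the generalized problem.
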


\begin{lemma}\label{lem:xi-analytic}
Under Assumptions~\ref{ass:A1} and \ref{ass:A2}, there exists $\varepsilon_0>0$ such that, for all $|\epsilon|<\varepsilon_0$, the normalized left zero-eigenvector $\xi(\epsilon)$ defined by
\[
\xi(\epsilon)^\top L(\epsilon)=0,
\qquad
\mathbf{1}^\top \xi(\epsilon)=1
\]
is uniquely determined and analytic in $\epsilon$. 
Consequently, $\Xi(\epsilon)=\mathrm{diag}(\xi(\epsilon))$ and $\Xi(\epsilon)^{-1/2}$ are analytic in $\epsilon$ for $|\epsilon|<\varepsilon_0$.
\end{lemma}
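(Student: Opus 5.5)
The plan is to characterize $\xi(\epsilon)$ as the unique solution of a bordered linear system whose coefficient matrix depends affinely on $\epsilon$. Analyticity then follows from Cramer's rule and the continuity of the determinant. Concretely, $L(\epsilon)=D(\epsilon)-A(\epsilon)$ is affine in $\epsilon$ by~\eqref{eq:Ae}, since $D(\epsilon)$ is the diagonal of row sums of $A(\epsilon)$. Define the $(n+1)\times(n+1)$ matrix
\[
K(\epsilon)=\begin{pmatrix} L(\epsilon)^\top & \mathbf{1}\\ \mathbf{1}^\top & 0\end{pmatrix}.
\]
The pair of conditions $\xi^\top L(\epsilon)=0$, $\mathbf{1}^\top\xi=1$ is then encoded by $K(\epsilon)\binom{\xi}{\mu}=\binom{0}{1}$.

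\textbf{Step 1: $K(0)$ is invertible.} Suppose $L^\top u+\mu\mathbf{1}=0$ and $\mathbf{1}^\top u=0$. Take the inner product of the first equation with the right null vector $\mathbf{1}$ of $L$. Since $\mathbf{1}^\top L^\top u=(L\mathbf{1})^\top u=0$, this gives $n\mu=0$, so $\mu=0$ and $L^\top u=0$. By Assumption~\ref{ass:A1}, $0$ is a simple eigenvalue, so $\ker L^\top=\mathrm{span}\{\xi\}$ and $u=c\xi$. Then $\mathbf{1}^\top u=c\,\mathbf{1}^\top\xi=c=0$ by Assumption~\ref{ass:A2}. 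Hence $K(0)$ is injective, and therefore invertible.

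\textbf{Step 2: analyticity of $\xi(\epsilon)$.} The function $\det K(\epsilon)$ is a polynomial in $\epsilon$ that is nonzero at $0$, so it is nonzero for $|\epsilon|<\varepsilon_1$. On that interval, $K(\epsilon)^{-1}$ is analytic (indeed rational) by the adjugate formula, so $(\xi(\epsilon),\mu(\epsilon))=K(\epsilon)^{-1}e_{n+1}$ is analytic. Applying the argument of Step 1 at $\epsilon$ shows $\mu(\epsilon)=0$, so $\xi(\epsilon)$ is a genuine normalized left null vector.

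For uniqueness, I need $0$ to remain a simple eigenvalue of $L(\epsilon)$. Its algebraic multiplicity is at least one for every $\epsilon$, because $L(\epsilon)\mathbf{1}=0$. By continuity of eigenvalues, the other $n-1$ eigenvalues stay near $\sigma(L)\setminus\{0\}$, whose real parts are positive, so they stay away from $0$ for small $|\epsilon|$. Thus $\ker L(\epsilon)^\top$ is one-dimensional, and the normalization determines $\xi(\epsilon)$ uniquely. Alternatively, invertibility of $K(\epsilon)$ alone already gives uniqueness of the solution of the bordered system, and hence of the normalized left null vector.

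\textbf{Step 3: $\Xi$ and $\Xi^{-1/2}$.} $\Xi(\epsilon)=\mathrm{diag}(\xi(\epsilon))$ is analytic entrywise. Since $\xi(0)>0$ entrywise by Assumption~\ref{ass:A2}, continuity gives $\xi_i(\epsilon)>\tfrac12\min_k\xi_k(0)>0$ on some interval $|\epsilon|<\varepsilon_0\le\varepsilon_1$. The map $t\mapsto t^{-1/2}$ is real-analytic on $(0,\infty)$, and compositions of analytic functions are analytic, so $\Xi(\epsilon)^{-1/2}$ is analytic.

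The main obstacle is Step 1, the nonsingularity of the bordered matrix. This is where simplicity of the zero eigenvalue from Assumption~\ref{ass:A1} and the normalization from Assumption~\ref{ass:A2} are really used, and the pairing with $\mathbf{1}$ to eliminate $\mu$ is the key observation. Everything else is routine continuity and composition of analytic maps.
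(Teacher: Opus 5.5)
Your proposal is correct and follows essentially the same route as the paper. The paper uses the same bordered system, proves invertibility at $\epsilon=0$ the same way (pairing with $\mathbf{1}$, then simplicity of the zero eigenvalue and the normalization $\mathbf{1}^\top\xi=1$), deduces analyticity from analytic inversion, shows $\mu(\epsilon)=0$, and gets positivity of $\xi(\epsilon)$, hence analyticity of $\Xi(\epsilon)^{-1/2}$, by continuity. Your additional remarks (that $\det K(\epsilon)$ is a polynomial in $\epsilon$, and the eigenvalue-continuity argument for uniqueness) are correct but not needed, since invertibility of the bordered matrix already gives uniqueness.
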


\begin{proof}
By~\eqref{eq:Ae}, the perturbed adjacency matrix $A(\epsilon)$ is analytic in $\epsilon$, and hence so is the Laplacian matrix $L(\epsilon)$. 
Moreover, for every $\epsilon$, $L(\epsilon)\mathbf{1}=0$,
so $0$ remains an eigenvalue of $L(\epsilon)$.

Consider the bordered matrix
\[
H(\epsilon)\coloneqq 
\begin{bmatrix}
L(\epsilon)^\top & \mathbf{1}\\
\mathbf{1}^\top & 0
\end{bmatrix}.
\]
We first show that $H(0)$ is invertible. 
Suppose
\[
H(0)
\begin{bmatrix}
u\\ \mu
\end{bmatrix}
=
0.
\]
Then$L^\top u+\mu \mathbf{1}=0$, $\mathbf{1}^\top u=0$.
Multiplying the first equation on the left by $\mathbf{1}^\top$ and using $L\mathbf{1}=0$, we obtain $n\mu=0$,
hence $\mu=0$. Therefore $L^\top u=0$. Since $0$ is a simple eigenvalue of $L$ by Assumption~\ref{ass:A1}, the nullspace of $L^\top$ is one-dimensional and spanned by $\xi$. Thus $u=c\,\xi$ for some scalar $c$. The constraint $\mathbf{1}^\top u=0$, together with $\mathbf{1}^\top \xi=1$, yields $c=0$, so $u=0$. Hence $H(0)$ is invertible.

Since $H(\epsilon)$ depends analytically on $\epsilon$ and $H(0)$ is invertible, there exists $\varepsilon_0>0$ such that $H(\epsilon)$ remains invertible for all $|\epsilon|<\varepsilon_0$, and $H(\epsilon)^{-1}$ is analytic in $\epsilon$. 
Define
\[
\begin{bmatrix}
\xi(\epsilon)\\ \mu(\epsilon)
\end{bmatrix}
\coloneqq 
H(\epsilon)^{-1}
\begin{bmatrix}
0\\ 1
\end{bmatrix}.
\]
Then $\xi(\epsilon)$ and $\mu(\epsilon)$ are analytic in $\epsilon$ and satisfy
$L(\epsilon)^\top \xi(\epsilon)+\mu(\epsilon)\mathbf{1}=0$, $\mathbf{1}^\top \xi(\epsilon)=1$.
Multiplying the first equation on the left by $\mathbf{1}^\top$ gives
$n\mu(\epsilon)=0$,
hence $\mu(\epsilon)=0$ and therefore
$L(\epsilon)^\top \xi(\epsilon)=0$, $\mathbf{1}^\top \xi(\epsilon)=1$.
This proves the analyticity and uniqueness of $\xi(\epsilon)$.

Finally, since $\xi(0)>0$ by Assumption~\ref{ass:A2}, continuity implies that $\xi_i(\epsilon)>0$ for all $i$ and all sufficiently small $|\epsilon|$. Therefore $\Xi(\epsilon)$ and $\Xi(\epsilon)^{-1/2}$ are well defined and analytic in $\epsilon$.
\end{proof}

Based on Lemma~\ref{lem:xi-analytic}, the matrices
\[
\widehat{\Xi(\epsilon)L(\epsilon)}
=
\frac{\Xi(\epsilon)L(\epsilon)+L(\epsilon)^\top \Xi(\epsilon)}{2}
\qquad\text{and}\qquad
\Xi(\epsilon)
\]
are analytic in $\epsilon$. 
Moreover, $\widehat{\Xi(\epsilon)L(\epsilon)}$ is symmetric and $\Xi(\epsilon)$ is positive definite for sufficiently small $|\epsilon|$. 
Under Assumption~\ref{ass:A3}, $\kappa(0)$ is a simple generalized eigenvalue of the pair $\bigl(\widehat{\Xi L},\,\Xi\bigr)$.
Hence, by Theorem~\ref{thm:analytic}, there exist analytic branches $\kappa(\epsilon)$ and $y(\epsilon)$ such that
\begin{align}\label{eq:gen-eig}
\widehat{\Xi(\epsilon)L(\epsilon)}\,y(\epsilon)
=
\kappa(\epsilon)\,\Xi(\epsilon)\,y(\epsilon),
\quad
y(\epsilon)^\top \Xi(\epsilon)y(\epsilon)=1.
\end{align}
Since $M(\epsilon)=\Xi(\epsilon)^{-1/2}\widehat{\Xi(\epsilon)L(\epsilon)}\,\Xi(\epsilon)^{-1/2}$, this generalized eigenvalue problem is equivalent to the standard eigenvalue problem $M(\epsilon)v(\epsilon)=\kappa(\epsilon)v(\epsilon)$, $v(\epsilon)=\Xi(\epsilon)^{1/2}y(\epsilon)$,
with $\|v(\epsilon)\|_2=1$.

We are now ready to prove Theorem~\ref{thm:kappa}.

\begin{proof}[Proof of Theorem~\ref{thm:kappa}]
Let $P(\epsilon)\coloneqq \widehat{\Xi(\epsilon)L(\epsilon)}$ and $Q(\epsilon)\coloneqq \Xi(\epsilon)$.
Then~\eqref{eq:gen-eig} reads
\[
P(\epsilon)y(\epsilon)=\kappa(\epsilon)Q(\epsilon)y(\epsilon),
\qquad
y(\epsilon)^\top Q(\epsilon)y(\epsilon)=1.
\]
Differentiating the generalized eigenvalue equation with respect to $\epsilon$ gives
$P' y + P y'
=
\kappa' Q y + \kappa Q' y + \kappa Q y'$,
where all quantities are evaluated at $\epsilon=0$. 
Multiplying on the left by $y^\top$ and using the symmetry of $P$ and $Q$, together with $Py=\kappa Qy$, $y^\top Q y=1$,
we obtain the standard first-order formula
\begin{align}\label{eq:kappa-derivative-general}
\kappa'
=
y^\top(P'-\kappa Q')y.
\end{align}

We now compute $P'$ and $Q'$. Since $Q'=\Xi'$ and
\[
P'
=
\frac{1}{2}\bigl(\Xi' L+\Xi L'+L'^T\Xi+L^T\Xi'\bigr),
\]
substituting into~\eqref{eq:kappa-derivative-general} yields
\begin{align*}
\kappa'
&=
\frac12\,y^\top
\bigl(
\Xi' L+\Xi L'+L'^T\Xi+L^T\Xi'-2\kappa\Xi'
\bigr)y.
\end{align*}
Because all quantities are real scalars,
$y^\top L'^T\Xi y = y^\top \Xi L' y$, $y^\top L^T\Xi' y = y^\top \Xi' L y$.
Therefore
\begin{align}\label{eq:kappa-derivative-simplified}
\kappa'
=
y^\top \Xi L' y
+
y^\top \Xi'(L-\kappa I)y.
\end{align}

It remains to evaluate the first term. 
By the perturbation model~\eqref{eq:Ae}, one has
\[
A'=\sum_{j\to i\in F}E_{ij},
\qquad
D'=\sum_{j\to i\in F}E_{ii},
\]
and hence $L'
=
D'-A'$.
Thus
\begin{align*}
y^\top \Xi L' y
=
\sum_{j\to i\in F}
y^\top \Xi(E_{ii}-E_{ij})y
=
\sum_{j\to i\in F}
\xi_i\,y_i\,(y_i-y_j).
\end{align*}

Next, we observe that the equation defining $\xi'=\partial_F\xi$ is linear in the perturbation $L'$, and hence linear in the edge set $F$. 
Indeed, differentiating $\xi(\epsilon)^\top L(\epsilon)=0$, $\mathbf{1}^\top\xi(\epsilon)=1$,
gives $L^\top \partial_F \xi = -L'^T \xi$, $\mathbf{1}^\top \partial_F \xi = 0$.
Since $L'$ is the sum of the individual edge contributions, it follows that
\[
\partial_F \xi = \sum_{j\to i\in F}\partial_e \xi,
\qquad
\partial_F \Xi=\sum_{j\to i\in F}\partial_e \Xi.
\]
Combining this with~\eqref{eq:kappa-derivative-simplified}, we obtain
\begin{align*}
\partial_F \kappa
=
\underbrace{
\sum_{j\to i\in F}
\xi_i\,y_i\,(y_i-y_j)
}_{\text{Directed Cut Energy}}
+
\underbrace{
y^\top
\partial_F \Xi
(L-\kappa I)y
}_{\text{Stationary Redistribution Effect}}.
\end{align*}
This completes the proof.
\end{proof}

\begin{proof}[Proof of Proposition~\ref{prop:CD}]
Recall that $M=\Xi^{-1/2}\widehat{\Xi L}\,\Xi^{-1/2}$,
and let $v$ be the normalized eigenvector of $M$ associated with $\kappa$, i.e.,
$Mv=\kappa v$.
Set $y=\Xi^{-1/2}v$.
Then $v=\Xi^{1/2}y$, and multiplying the eigenvalue equation by $\Xi^{1/2}$ gives
\begin{align}\label{eq:aux-gen}
\widehat{\Xi L}\,y=\kappa\,\Xi y.
\end{align}
Using $\widehat{\Xi L}=(\Xi L+L^\top\Xi)/2$,
we obtain from~\eqref{eq:aux-gen}
\[
\Xi L\,y
=
\kappa\,\Xi y+\frac12(\Xi L-L^\top\Xi)y.
\]
Recalling the definition
$C\coloneqq \Xi L-L^\top\Xi$,
we get
$(L-\kappa I)y=\Xi^{-1}Cy/2$.
Now define $D\coloneqq  (\partial_F\Xi)\Xi^{-1}$.
Then the stationary redistribution term satisfies
\begin{align*}
R
\coloneqq 
y^\top(\partial_F\Xi)(L-\kappa I)y
=
\frac12\,y^\top(\partial_F\Xi)\Xi^{-1}Cy
=
\frac12\,y^\top D C y.
\end{align*}
This proves~\eqref{eq:RDC}.
\end{proof}

\begin{proposition}\label{PROP:kappa-upper}
Under Assumptions~\ref{ass:A1}--\ref{ass:A3}, the generalized algebraic connectivity
$\kappa$ satisfies
\[
\kappa \le \frac{1}{n-1}\sum_{j\to i\in E} a_{ij}.
\]
\end{proposition}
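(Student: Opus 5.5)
We bound $\kappa$ by the trace of $M$. Since $M$ is positive semidefinite with $\lambda_1(M)=0$ and eigenvalues ordered $0=\lambda_1<\lambda_2=\kappa<\lambda_3\le\cdots\le\lambda_n$, we have $(n-1)\kappa\le\sum_{k=2}^n\lambda_k(M)=\operatorname{tr}(M)$. The whole proof therefore reduces to computing $\operatorname{tr}(M)$ and comparing it with $\sum_{j\to i\in E}a_{ij}$.

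For the trace, note that $\Xi$ is diagonal. Hence the diagonal of $M=\Xi^{-1/2}\widehat{\Xi L}\,\Xi^{-1/2}$ is
\[
M_{ii}=\xi_i^{-1}(\widehat{\Xi L})_{ii}=\xi_i^{-1}\cdot\tfrac12(\xi_iL_{ii}+L_{ii}\xi_i)=L_{ii}.
\]
Thus $\operatorname{tr}(M)=\operatorname{tr}(L)=\sum_i k_i^{\mathrm{in}}=\sum_i\sum_j a_{ij}$. Because the graph is simple (no self-loops) and $a_{ij}=0$ off $E$, this equals $\sum_{j\to i\in E}a_{ij}$.

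Combining the two steps gives $\kappa\le\frac{1}{n-1}\operatorname{tr}(M)=\frac{1}{n-1}\sum_{j\to i\in E}a_{ij}$.

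The only delicate point is justifying that the remaining eigenvalues $\lambda_3,\dots,\lambda_n$ are each at least $\kappa$ and nonnegative, so that discarding nothing loses the inequality. This follows directly from the ordering and the positive semidefiniteness in Assumption~\ref{ass:A3}. The fact that $0$ is an eigenvalue of $M$, with eigenvector $\Xi^{1/2}\mathbf 1$ since $L\mathbf 1=0$ and $\xi^\top L=0$, confirms $\lambda_1=0$. Signed weights cause no issue: the identity $\operatorname{tr}(M)=\operatorname{tr}(L)$ is purely algebraic.
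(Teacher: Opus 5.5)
Your proposal is correct and follows the same route as the paper: bound $(n-1)\kappa \le \sum_{r\ge 2}\lambda_r(M)=\operatorname{tr}(M)$ using positive semidefiniteness and the eigenvalue ordering, then show $\operatorname{tr}(M)=\operatorname{tr}(L)=\sum_{j\to i\in E}a_{ij}$. The only cosmetic difference is that you get the trace identity entrywise, via $M_{ii}=L_{ii}$, whereas the paper uses cyclic invariance of the trace; both are equally direct.
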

\begin{proof}
Let $0=\lambda_1(M)<\lambda_2(M)=\kappa\le \cdots \le \lambda_n(M)$
be the eigenvalues of
\[
M=\Xi^{-1/2}\frac{\Xi L+L^\top \Xi}{2}\Xi^{-1/2}.
\]
Since $M$ is symmetric positive semidefinite, we have
\[
(n-1)\kappa \le \sum_{r=2}^n \lambda_r(M)=\operatorname{tr}(M).
\]
Moreover, by the cyclic
invariance of the trace, 
\[
\operatorname{tr}(M)
=
\frac12\operatorname{tr}(\Xi L\Xi^{-1})
+
\frac12\operatorname{tr}(L^\top\Xi\Xi^{-1})=\operatorname{tr}(L).
\]
Therefore,
\[
(n-1)\kappa \le \operatorname{tr}(L)=
\sum_{i=1}^n L_{ii}
=
\sum_{i=1}^n\sum_{j\ne i}a_{ij}
=
\sum_{j\to i\in E}a_{ij}.
\]
This completes the proof.
\end{proof}

\ifCLASSOPTIONcompsoc
  \section*{Acknowledgments}
\else
  \section*{Acknowledgment}
\fi

The authors gratefully acknowledge support from the Anhui Center for Applied Mathematics and the Key Laboratory of the Ministry of Education for Mathematical Foundations and Applications of Digital Technology.

The implementation of the proposed methods is publicly available at:
\url{https://github.com/xinyu970905-cloud/SpectralSensitivity}.

\ifCLASSOPTIONcaptionsoff
  \newpage
\fi

\bibliographystyle{IEEEtran}
\bibliography{IEEEabrv,Laplacian}

\end{document}